\def\lcross{{>\!\!\!\triangleleft}}
\def\rcocross{{\blacktriangleright\!\!<}}
\def\rlbicross{{\triangleright\!\!\!\blacktriangleleft}}
\def\lrbicross{{\blacktriangleright\!\!\!\triangleleft}}
\def\dcross{{\bowtie}}
\def\rbiprod{{\cdot\kern-.33em\triangleright\!\!\!<}}
\def\lbiprod{{>\!\!\!\triangleleft\kern-.33em\cdot}}
\def\dprod{{\cdot\kern-.55em\triangleright\!\!\triangleleft\kern-.33em\cdot}}
\def\ot{\otimes}
\newcommand{\C}{\mbox{${\mathbb C}$}}
\newcommand{\R}{\mbox{${\mathbb R}$}}
\newcommand{\N}{\mbox{${\mathbb N}$}}
\newcommand{\Z}{\mbox{${\mathbb Z}$}}
\newcommand{\CH}{\hbox{{$\mathcal H$}}}
\newcommand{\cP}{\mathcal P}
\newcommand{\mH}{\hbox{{$\mathfrak H$}}}
\newcommand{\id}{{\rm id}}
\newcommand{\eps}{{\epsilon}}
\newcommand{\tens}{\mathop{\otimes}}
\newcommand{\la}{{\triangleright}}
\newcommand{\ra}{{\triangleleft}}
\newcommand{\<}{\langle}
\renewcommand{\>}{\rangle}
\renewcommand{\o}{{}_{\scriptscriptstyle(1)}}
\renewcommand{\t}{{}_{\scriptscriptstyle(2)}}
\newcommand{\thr}{{}_{\scriptscriptstyle(3)}}
\newcommand{\fo}{{}_{\scriptscriptstyle(4)}}
\newcommand{\fiv}{{}_{\scriptscriptstyle(5)}}
\newcommand{\si}{{}_{\scriptscriptstyle(6)}}
\newcommand{\sev}{{}_{\scriptscriptstyle(7)}}
\newcommand{\ei}{{}_{\scriptscriptstyle(8)}}
\newcommand{\ten}{{}_{\scriptscriptstyle(10)}}
\newcommand{\ele}{{}_{\scriptscriptstyle(11)}}
\newcommand{\twe}{{}_{\scriptscriptstyle(12)}}
\newcommand{\thi}{{}_{\scriptscriptstyle(13)}}
\newcommand{\fou}{{}_{\scriptscriptstyle(14)}}
\newcommand{\fif}{{}_{\scriptscriptstyle(15)}}
\newcommand{\sixt}{{}_{\scriptscriptstyle(16)}}
\newcommand{\Nine}{{}_{\scriptscriptstyle(9)}}
\newcommand{\sevt}{{}_{\scriptscriptstyle(17)}}
\newcommand{\eit}{{}_{\scriptscriptstyle(18)}}
\newcommand{\sso}{{}^{\scriptscriptstyle(1)}}
\newcommand{\sst}{{}^{\scriptscriptstyle(2)}}
\renewcommand{\to}{{}_{\scriptscriptstyle(\tilde{1})}}
\renewcommand{\ot}{{}_{\scriptscriptstyle(\tilde{2})}}
\newcommand{\tthr}{{}_{\scriptscriptstyle(\tilde{3})}}
\newcommand{\tfo}{{}_{\scriptscriptstyle(\tilde{4})}}
\newtheorem{thm}{Theorem}[section]
\theoremstyle{definition} 
\newtheorem{defn}[thm]{Definition}
\newtheorem{lemma}[thm]{Lemma}
\newtheorem{pro}[thm]{Proposition}
\def\be{\begin{equation}}
\def\ee{\end{equation}}
\newcommand{\bes}{\begin{eqnarray}}
\newcommand{\ees}{\end{eqnarray}}
\begin{document}
\title{Semidual Kitaev  lattice model
 and tensor network representation}

\author[a]{{\bf Florian Girelli,}}\emailAdd{fgirelli@uwaterloo.ca}
\affiliation[a]{Department of Applied Mathematics, University of Waterloo, Waterloo, Ontario, Canada}

\author[b, c, d]{{\bf Prince K. Osei,}}\emailAdd{posei@quantumleapafrica.org}
\affiliation[b]{Quantum Leap Africa, AIMS Rwanda Center, Sector Remera, KN3 Kigali, Rwanda}
\affiliation[c]{Perimeter Institute for Theoretical Physics, 31 Caroline Street North, Waterloo, Ontario  N2L 2Y5, Canada}
\affiliation[d]{Department of Mathematics, University of Ghana, PO Box LG 25, Legon, Ghana}

\author[a,b]{{\bf Abdulmajid Osumanu}}\emailAdd{a3osuman@uwaterloo.ca}

\date{\small\today}

\medskip
\abstract{
Kitaev's lattice models 
  are usually defined as representations  of  the Drinfeld quantum double $D(H)=H\dcross H^{*\text{op}} $, as an example of a double cross product quantum group. We propose a new version based instead on $M(H)=H^{\text{cop}}\lrbicross H$ as an example of  Majid's bicrossproduct quantum group, related by semidualisation or  `quantum Born reciprocity' to $D(H)$.   
  Given a finite-dimensional Hopf algebra $H$, we show that a quadrangulated oriented surface defines a representation of the bicrossproduct quantum group $H^{\text{cop}}\lrbicross H$.
 Even though the bicrossproduct  has a more complicated and entangled coproduct, the construction of this new model is relatively natural as it relies on the use of the covariant Hopf algebra actions. Working locally, we obtain an exactly solvable Hamiltonian for the model and  provide a definition of the ground state  in terms of a tensor network representation.   }

\maketitle


\section{Introduction }

\subsection{Motivations }
The Kitaev quantum double models \cite{AYK} were originally proposed to exploit topological phases of matter for fault-tolerant quantum computation. The models are based on quantum many-body systems	 exhibiting topological order. Their physics is obtained from Topological quantum field theories (TQFTs), while their underlying  mathematical structure is based on Hopf algebras. For a given  finite group $G$, Kitaev constructed an `extended' Hilbert space on a triangulated oriented surface $\Sigma$ and an exactly solvable Hamiltonian, whose ground state or protected space is a topological invariant of the surface.  It turns out that  this triangulation or graph defines a representation of the Drinfeld quantum double  $D(G)$.  A well  known  example of these models is the Kitaev toric code, which is based on the cyclic group $\Z_2$ \cite{AYK}. See also \cite{Delgago2d3d} for a recent account. It was anticipated  in \cite{AYK} that these models could be generalized to be based  on a finite-dimensional Hopf algebra $H$. This was achieved in \cite{BMCA}. Other models in the family of topologically ordered spin models  such as the Levin-Wen string-net 
models \cite{Levin04, Levin06}  which are based on a representation category of $H$ are  also related to the Kitaev models \cite{Oliver09, Kadar08}. 
In particular, for a fusion category of representations of finite groups, a Fourier transformation of the Kitaev models lead to the extended string-net models \cite{Oliver09, Kadar09,Buerschaper2010}. The structure of excitations for these models is also well established \cite{KitaevKong,Lan14,Hu18}. One defines the so called ribbon operators on the Hilbert space that generate the excitations.

\smallskip
The Kitaev quantum double  models can be understood to describe the moduli space of flat connections on a 2d surface with defect excitations. 
From the point of view of quantum gravity, they are of strong interest as they are directly  related to certain 3d TQFTs defined in terms of (quasitriangular) Hopf algebras. 
It  is known  that the protected space of a Kitaev model for a finite-dimensional semisimple Hopf algebra $H$ on an oriented surface $\Sigma$  is exactly the vector space 
that the Turaev-Viro TQFTs \cite{BW96, Turaev92} for the representation category of $H$ assigns to $\Sigma$ \cite{Balsam12, Kadar09, Kadar08, Koenig10}.  
The construction of these models is also closely related to BF theory with defects \cite{Delcamp16,Clememt16,Bianca16,Horowitz89,JBaez95}, a TQFT describing locally flat connections.
Other recent examples include  a dual picture which  was introduced in the quantum gravity setting where the excitations have been swapped \cite{Dupuis:2017otn}. Even though this was discovered independently, this result could have been guessed in light of the  notion of electro-magnetic duality well known in topological quantum computing \cite{Buerschaper2010}. 
A recent paper by Meusburger  show that Kitaev's model for a finite-dimensional semisimple Hopf algebra $H$ is equivalent to the combinatorial quantization of Chern-Simons
theory for the Drinfeld double $D(H)$ \cite{CM}. This emerges in a gauge theoretic framework, in which both models are
viewed as Hopf algebra-valued lattice gauge theories \cite{MW}. 

These results have opened new perspectives on the relations between topological quantum information(TQI) and quantum gravity.
 Although each framework comes with its own motivation,  they share similar mathematical concepts.  For example, in the case of TQI cases, one deals with a (ribbon) graph decorated by Hopf algebra elements and constructs an exactly solvable Hamiltonian defined in terms of  operators acting on the nodes and faces of the graph. The vacuum state of this can be interpreted from the quantum gravity perspective as the pure gravity case, whereas the   excitations of the TQI Hamiltonian, used to perform quantum computations, are interpreted as particles with mass or spin depending on their location. In the case of  loop quantum gravity,  one has torsion excitations on the nodes, i.e. spin, whereas on the faces, one has curvature excitations, i.e. mass. The most relevant algebraic structure to deal with representations which classify particles for example and indicate their braiding, is not only the Hopf algebra $H$ but  the associated Drinfeld double $D(H)$. Once again, this  structure was identified using different arguments in each of the different frameworks. In the TQI case, one deals with the Drinfeld's quantum double of finite dimensional (semisimple) Hopf algebras (e.g. built from finite groups) \cite{AYK, BombinDeldado, BMCA} whereas in the quantum gravity case one makes use of the quantum double of Hopf algebras  built from Lie groups or their quantum deformation \cite{AGSI,AGSII,AS,Schroers,Freidel:2004vi,BNR,MeusburgerSchroers1,MeusburgerSchroers2,MN, noui2006three, SchroersReview}. 
\smallskip

As described above, the Drinfeld quantum double is in a sense the common quantum group which arise in the quantum computing setting. However, from the point of view of quantum gravity, other quantum groups emerge. In particular, the  bicrossproduct  quantum group originally proposed by Majid \cite{SM}  as a new foundation for quantum gravity.  The bicrossproduct quantum groups are interpreted here as algebras of observables of quantum systems so that one can view them as functions on a quantum phase space.
These bicrossproduct quantum groups are also known to be valid candidates for the combinatorial quantization of Chern-Simons theory of  3d gravity \cite{cath-bernd, OseiSchroers1, prince-bernd,PKO1}.
In particular, from the point of view of quantum group theory, the quantum double $D(H)=H\dcross H^{*\text{op}} $ is an example of a double cross product of Hopf algebras \cite{SM}.
  It turns out that these are related to 
  the bicrossproduct ones by Majid's idea of semidualisation or `quantum Born reciprocity',  
    proposed for quantum gravity where one can exchange position and momentum  degrees of freedom in an
algebraic framework \cite{Ma:pla}. 
The semidual partner of the quantum double here is the `mirror product' $M(H)=H^{\text{cop}}\lrbicross H$, so this is the natural candidate for a `semidual Kitaev model' which we propose here. 
It is also known that the  two  quantum groups are related  by a Drinfeld twist if $H$ is factorisable. This was  originally introduced in \cite{Majid94} as an algebraic Wick rotation, and recently  applied in   \cite{MO} to relate the bicrossproduct model quantum spacetime \cite{Majid:1994cy}  and the fuzzy $\R^3$ quantum spacetime by a module algebra twist as well as to find the  universal  $R$-matrix from the former model.

From the above considerations, while the bicrossproduct quantum groups emerge in a quantum gravity   framework, they are  yet to be explored for the topological quantum computation models. 
There is no general framework for the latter for general double crossproduct and nor at the moment will we find one for general bicrossproducts. However, the most important examble is the quantum double and we will see that it is possible to construct lattice quantum computation models for the case of the bicrossproduct corresponding to this. 

Our consideration in the present paper will be local, i.e effectively an infinite square lattice, with topological aspects needed to apply these results to the quadrangulation or other cellular decomposition of a general oriented connected surface $\Sigma$ to be considered elsewhere.

\smallskip

\subsection{General features of the  Kitaev model} 
We first recall the set up for the standard Kitaev model based on the double $D(H)$ but in a manner general enough to also apply to $M(H).$ In both cases the data we really need is a quantum group $D$ acting on an algebra $A$ (so the latter is a $D$-module algebra in quantum group parlance). { In 3D quantum gravity, $D$ could be the quantum Poinca\'e group acting on  quantum spacetime  $A$, which is then interpreted mathematically as a module algebra.}  We also need that $D$ factorises as an algebra into two subalgebras, $H_1,H_2$, which are each Hopf algebras   but not necessarily sub-Hopf algebras. Thus each element of $D$ can be written in the form $\sum_ia_ib_i$ for $a_i\in H_1$ and $b_i\in H_2$ uniquely in a certain tensor sense \cite{SM}.
The restriction of the action of $D$ to each subalgebra gives `triangle operators' $T_+^a, L^h_+$ respectively for their action on $A$.

It is convenient to suppose that $A$ is itself a Hopf algebra and we then define
\begin{equation}
T^{a}_{-} = S\circ T^{a}_{+}\circ S^{-1}, \quad 
 L^{h}_{-} = S\circ L^{h}_{+}\circ S^{-1}
\end{equation}
using its antipode.

{ In the case of the double $D(H)$, the triangle operators  $T_\pm^a, L^h_\pm$ make  $A$ respectively a $H_1$ module algebra and a $H_2$ module coalgebra. We choose to keep these features  as a general property of the Kitaev model. }

Now let $\Gamma$  be a graph embedded in an orientable Riemann surface $\Sigma$ and define a Hilbert space 
$H_\Gamma= A^{\tens |E|}$ where $E$ are the edges of the graph. On this we define operators $A^h(v,p), B^a(v,p)$ associated to each vertex $v$ and adjacent face $p$ as follows. First of all, the edges around $v$ and $p$ are both numbered in line with the orientation and prescribed starting point. If all the edges at $v$ point in then $A^h(v,p)$ is the tensor product action of $h \in D$ (which uses its coproduct and the $L_+$ on each such edge). If some of them point out then we use $L_-$ on those edges. 
Similarly, $B^a(v,p)$ is the tensor product action of $a \in D$ if all the adjacent edges to $p$ are clockwise (using the coproduct and $T_+$). If some of them are anticlockwise, we use $T_-$ on those. These conventions are depicted in Figure \ref{edge operators bis}. 

\begin{figure}[H]
\begin{center}
\includegraphics[scale=.85]{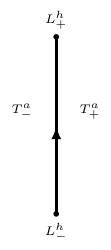}
\caption{Kitaev  convention for triangle operators acting on an edge. }
\label{edge operators bis}
\end{center}
\end{figure}

Other edges in $H_\Gamma$  are unaffected by $A^h(v, p)$ and $B^a(v, p).$ We will then let $a, h$ be integral elements to obtain `vertex and face' operators $A_v$ and $B_p$. Details will be given in Section 3.

The standard Kitaev model based on the quantum double has $D=D(H)=H\dcross H^{*\text{op}} $  acting on $A = H$, where $H$ is a finite dimensional Hopf algebra (with further properties). Here the two indicated factors are sub-Hopf algebras which makes things more straightforward. This `kinematic' level of
the data actually makes sense for any double cross product quantum group $H_1\dcross H_2$ in the sense of \cite{SM}, acting on $H_2^*$ . However, we can use the same set up for $D=M(H)=H^{\text{cop}}\lrbicross H$ acting on $A=H^*$, and this is what we will do. 
The big difference now is that $H$ here is not a sub-Hopf algebra but a quotient (there is a canonical Hopf algebra surjection $D \rightarrow H$), but it is still a subalgebra as part of an algebra cross product, so we can still
dissect the action of $D$ into $T_+,L_+$. We still define $T_-,L_-$ in the same way for the other orientations but we must be careful in the manner stated to
use the coproduct of $D$ when combining these to build $A^h(v, p)$ and $B^a(v, p).$
This much of the setup again makes sense for any bicrossproduct quantum group $H_1\lrbicross H_2$ as in \cite{SM} acting on $H_1^*$. In fact the data for these models are equivalent in the finite-dimensional case -- if you can build one then you can canonically build the other by a process of semidualisation \cite{SM, Ma:pla}.

In the standard case of $D(H)$, it can be shown that the associated Hamiltonian is topological, which means physically that the elementary excitations of the Kitaev model are anyons and facilitate the realisation of topological quantum computing by obeying some braid statistics \cite{AYK}. The braiding is possible as a result of the quasitriangular structure or universal $R$-matrix of $D(H)$. These aspects for the $M(H)$ case will be examined elsewhere but being isomorphic to
$H^{\text{cop}}\tens H$ as a Hopf algebra, one has a natural canonical quasitriangular structure if $H$ does. We will however, obtain an exactly solvable Hamiltonian. Following \cite{BMCA}, we then provide an explicit tensor network representation of the model. Such a representation is a starting point to explore some interesting physical properties of the system. For example in \cite{BMCA}, the tensor network representation is used to probe the notion of entanglement and check whether we have an area law for  entanglement entropy. It is also used to  define a notion of renormalization  implementing a hierarchy of states.

This completes our overview. Section 2 provides some preliminaries on Hopf algebras notations, the bicrossproduct quantum group $H^{\text{cop}}\lrbicross H$  and its action on $H^*$. In Section 3, we provide the detailed construction of the lattice representation based in this data and obtain the Hamiltonian for the model. In Section 4, we define the tensor network representation for our model and provide, in particular, the realisation of the ground state in our setting. We conclude in Section 5.

\section{Preliminaries}
\label{prelims}
We follow the notations and conventions from the book \cite{SM}.   Unless otherwise specified, we work over a field $k$ of characteristic zero.
A Hopf algebra or `quantum group' $H$ is an algebra and a coalgebra, with a linear coproduct $\Delta:H \longrightarrow  H\tens H$ which is an algebra homomorphism and satisfies the coassociativity condition $(\Delta\otimes \mbox{id})\circ \Delta=(\mbox{id}\otimes\Delta)\circ\Delta$.
We use  Sweedler notation for the coproduct so that for all $h\in H$, $\Delta (h)= h_{\o}\otimes h_{\t}=h\sso\otimes h\sst$.
There is also a counit $\eps:H \longrightarrow k$ and an antipode $S:H\longrightarrow H$ satisfying in particular $(Sh\o)h\t=h\o S h\t=\eps(h)$ for all $h\in H$. If $H$ is finite-dimensional, then $S^{-1}$ exist.
We denote by $H^{\tens n}$, $n\in \N$ the $n$-fold tensor product of $H$.  The composition of $n$ coproducts is the map $\Delta^{(n)}: H\rightarrow H^{\tens (n+1)}$  defined by $\Delta^{(n)}(h)=h_{\o} \tens h_{\t} \tens ... \tens h_{(n+1)}$. This is well defined since the coproduct is coassociative. 
We denote by $H^*$ the dual Hopf algebra with dual pairing given by the non-degenerate bilinear map $ \langle \,,\,\rangle$. $H^{\text{cop}}, H^{\text{op}}$ denote taking the opposite coproduct or opposite product in $H$.


An algebra $A$ is said to be an $H$-module \textit{algebra} if $A$  is a left $H$-module and this action is covariant,  i.e. 
\begin{equation}\label{module algb}
h\la (ab)= (h\o\la a)(h\t\la b),\hspace{1cm} h\la 1=\eps(h),\;\; a\in A, \, h\in H,
\end{equation}
where $\la$ denotes a left action.   
We say that  $(H,A)$ is a  \textit{covariant system}. 
We will generally prefer left actions as here but there is an analgous notion for a right module algebra by action $\ra$. This can always be coverted to a left one of the opposite algebra by
$h\triangleright a = a\triangleleft S^{-1}h$.  If $H$ acts on vector spaces $V,W$ then it also acts on $V\tens W $ by $h\la(v\tens w) = h\o\la v \tens h\t \la w$ for all $h\in H$, $v\in V$ and $w\in W$.

We now turn to the construction of  $M(H) =H^{\text{cop}}\lrbicross H$  from \cite{SM}. This is an example of Majid's theory of bicrossproducts $H_1\rlbicross H_2$ just as the more well-known $D(H) = H\dcross H^{*\text{op}}$  is an example of his theory of double cross products $H_1\dcross H_2$. We refer to \cite{SM} for details, while here suffice it to say that the key difference is that in the latter case each factor acts on the vector space of the other subject to certain axioms and the coproduct is the tensor product one. By contrast, in the bicrossproduct case the $H_2$ factor acts on the $H_1$ factor as a left $H_2$-module algebra and results in a semidirect or cross product algebra $H_1\lcross H_2$. Meanwhile the $H_1$ factor right coacts on the coalgebra of $H_2$ and results in a semidirect or cross coproduct coalgebra $H_1\rcocross H_2$ in Majid's notation. The construction on the coalgebra side here is conceptually dual to the construction on the algebra side. The bicrossproduct construction adds further axioms so that the two fit together to form a quantum group, with the merged notation.

In our case, $M(H)$ is an example as follows. 
The  left action of $H$ on $H^{\text{cop}}$ and the right coaction of $H^{\text{cop}}$ on $H$ are given respectively as 
\begin{equation}
\label{coaction of M(H)}
h\triangleright a =h_{\o}a Sh_{\t},\hspace{0.7cm} \Delta_{R}h=h_{\t}\otimes h_{\o}Sh_{\thr}.
\end{equation}
The algebra is 
\begin{eqnarray}
\label{sd4}
(a\otimes h)(b\otimes g) = a (h_{\o}b Sh_{\t})\otimes h_{\thr}g,\hspace{0.5cm}h,g\in H,\hspace{0.3cm}a,b\in H^{\text{cop}}.
\end{eqnarray}
Here, $H^{\text{cop}}\tens 1$ and $1\tens H$ appear as subalgebras but with mutual commutation relation  fully determined by 
\be
\label{strengthing mirror}
hb :=  (1\otimes h)(b\otimes 1)= (h_{\o}b Sh_{\t})h_{\thr},
\ee
where the identification  $h\rightarrow 1_{H^{\text{cop}}}\otimes h$ and $b\rightarrow b\otimes 1_{H}$ are algebra morphisms. The coproduct and antipode are respectively 
\begin{align}
\label{coproduct of M(H)}
\Delta(a\otimes h) =&\, a_{\t}\otimes h_{\t}\otimes a_{\o}h_{\o}Sh_{\thr}\otimes h_{\fo}, \\
\label{antipode of M(H)}
S(a\tens h) =&\, (1\tens Sh\t)(S(ah\o Sh\thr)\tens 1).
\end{align}
The Hopf algebra
$M(H)$ acts covariantly on $H^{*\text{op}}$ from the right according to 
\begin{equation}
\label{sd6}
\phi\triangleleft(a\otimes h)=\langle a h_{\o},\phi_{\o}\rangle\langle Sh_{\t},\phi_{\thr}\rangle \phi_{\t},
\end{equation}
and using  the antipode \eqref{antipode of M(H)} of $ M(H)$, this gives rise to covariant left action on $H^{*}$ 
\begin{equation}
\label{left covariant action on H*}
(a\otimes h)\triangleright \phi =\langle Sh_{\o}Sa ,\phi_{\o}\rangle\langle h_{\t},\phi_{\thr}\rangle \phi_{\t}.
\end{equation}
We refer to \cite{SM} and to the recent work \cite{MO} for more details.

\section{Semidual Kitaev model}
\label{BCPmodel}
In this section, we construct a lattice representation based on the mirror bicrossproduct  $H^{\text{cop}}\lrbicross H$ acting on 
$H^*$ and obtain an exactly solvable Hamiltonian for the model.  

Since we consider only the local quadrangulation of a $2$d oriented surface, we effectively work with $\Gamma$ a square lattice, without worrying about boundaries or the topology of the surface. We denote by $V,E,F $ respectively  the set of vertices, edges, faces of the graph $\Gamma$.
Given a finite-dimensional Hopf algebra $H$ with dual $H^*$, we define the 
 extended Hilbert  space $\mathcal{H}_{\Gamma}$ for the model by assigning $H^*$ to each edge of $\Gamma$ so that 
$$ \mathcal{H}_{{\Gamma}}=\bigotimes_{e\in \Gamma} H^{*},$$ the $|E|$-fold tensor product of  $H^*$ with each copy assigned to an edge of $\Gamma$. We identify $\phi\mapsto S(\phi)$, $\phi\in H^{*}$ if the orientation is reversed. Since $H^*$ is finite-dimensional, $S^2 = \id$ and this isomorphism is well defined. 


\subsection{Triangle operators }
To each edge $e\in E$, we assign a family of basic linear operators $(L^{h}_{\pm})_e$, $(T^{a}_{\pm})_e$ which are linear maps on the copy of $A=H^*$ in the Hilbert space $H_\Gamma = H^{*\tens |E|}$ associated to edge $e$, indexed by elements of the Hopf algebras $H$ and  $H^{\text{cop}}$ respectively.  They act trivially on the copies associated to other edges.
 These operators are called triangle operators \cite{AYK} and are defined as follows:
 
\begin{defn}\textit{ 
Let $H$ be a finite-dimensional Hopf algebra and $\Gamma$ a graph with cyclic ordering of edge ends at each vertex. Let  $h\in H$, $\phi\in H^{*}$ and $a\in H^{\text{cop}}$.  
The triangle operators for an edge $e\in E$ are linear maps 
\[
(L^{h}_{\pm})_e: H^{*\tens |E| }\rightarrow H^{*\tens |E|},\quad (T^{a}_{\pm})_e:H^{*\tens |E|}\rightarrow H^{*\tens |E|},
\]
where $L^h_{+}, T^a_+:H^{*} \rightarrow H^{*}$ acting on the copy associated to $e$ are given by}
\begin{eqnarray}
\label{triangle operators}
L^h_{+}(\phi) &=& \langle h,S\phi\o\phi\thr\rangle \phi\t, \hspace{0.7cm} L^h_{-}(\phi)=\langle h,\phi\thr S^{-1}\phi\o\rangle\phi\t,\nonumber\\
T^{a}_{+}(\phi) &=& \langle Sa, \phi\o\rangle \phi\t,  \hspace{1.8cm} T^{a}_{-}(\phi)=\langle a, \phi\t\rangle \phi\o.
\end{eqnarray}
\end{defn}
Here, the operators $L_+$ and $T_+$ are the restrictions to $1 \tens h$ and $a \tens1$ of the canonical left action  \eqref{left covariant action on H*}  of the bicrossproduct $ M(H)$ on $H^*$. The $L_-$ and $T_-$ are also left actions obtained using the relations 
\begin{equation}\label{relations}
{ L^{h}_{-}(\phi) = (S\circ L^{h}_{+}\circ S^{-1})(\phi), \quad T^{a}_{-}(\phi) = (S\circ T^{a}_{+}\circ S^{-1})(\phi).}
\end{equation}
Next, we denote by  $\rho^{a\tens h}_{+}= T^{a}_{+}L^{h}_{+}$ the full representation of $M(H)$ defined in \eqref{left covariant action on H*}. Using \eqref{relations} we have $\rho^{a\tens h}_{-}= T^{a}_{-}L^{h}_{-}$, i.e.,  
\be 
\label{rhominus}
\rho^{a\tens h}_{-}(\phi)=\langle a h\o,\phi\thr\rangle\langle h\t,S^{-1}\phi\o\rangle\phi\t.
\ee


\medskip

It is interesting to note that for the bicrossproduct covariant  system $( M(H), H^{*})$,  
the canonical left action  of the Hopf algebra $H^{\text{cop}}$ on $H^{*}$ is the coregular action $T_{+}$ and makes $H^{*}$ into an $H^{\text{cop}}$-module algebra by construction while the canonical left action  of  the  Hopf algebra $H$ on $H^{*}$ is the coadjoint action $L_{+}$ and makes $H^{*}$ an $H$-module coalgebra. This fits with the fact that the factor $H$ is not a sub-quantum group and we must use the correct (semidirect) coproduct of $ M(H)$ to have $H^*$ covariant. In the quantum double model the covariant action of $D(H)$ on $H$ does not define $L_{\pm}, T_{\pm}$ because they do not lead to a graph representation of $D(H)$. 
In the  $D(H)$ Kitaev lattice model, one has a $D$ module not a $D$ module algebra.



\subsection{Geometric operators}

Next, the triangle operators are used to define vertex and face operators  $A^h(v,p)$ and $B^a(v,p)$  for the bicrossproduct model on the extended  Hilbert space $\mathcal{H}_{{\Gamma}}$. These operators are also called geometric operators. Both operators depend on a pair of vertex and face that are adjacent to each other. They require linear ordering of edges at each vertex and in each face. This is specified by a site \cite{ AYK} $s=(v,p)$, which consist of a face  $p$ and adjacent vertex $v$  and represented by dotted lines as shown in Figure \ref{arbitrarygraph}. {Both vertices and faces are oriented anti-clockwise.}

In summary, the definition of $A^h(v,p)$ follows by assigning the coproduct of $h\in H$ along the edges in an anticlockwise manner taking into account the site $(v,p)$, and then the appropriate action of $h$ depending on the edge orientation. Likewise, the operator $B^a(v,p)$ is defined, but the edges associated to the face $p$ are assigned the coproduct of $a\in H^{\text{cop}}$ anticlockwise starting from the vertex $v$. The action of $a$ is then taking depending on whether the edge orientation is on the left or right of the face $p$. 

\begin{figure}[h]
\begin{center}
\includegraphics[scale=.7]{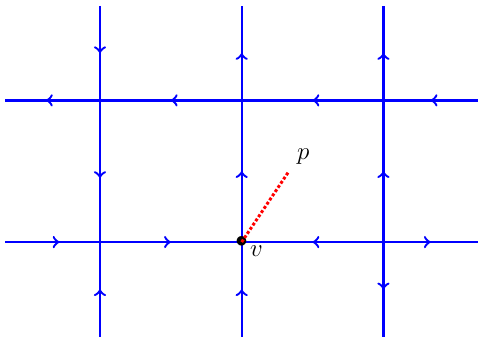}
\caption{This figure illustrates a choice of site (the dotted line) in the  graph $\Gamma$.}
\label{arbitrarygraph}
\end{center}
\end{figure}


 \newpage


 \begin{defn}\label{vex and face operators for arbitrary graph}

\textit{ Let  $(v,p)$ be the site of $\Gamma$  and $h\in H$, $a\in H^{\text{cop}}$, $\phi^i \in H^{*}$.  Let $H$ be involutive i.e., $S^2 = \id$.}

\begin{itemize} 
\item[1.]\textit{ The vertex operator $A^h(v,p):H^{*\tens |E|}\rightarrow H^{*\tens |E|}$   encodes the representation of $M(H)$ at the site  by 
\begin{equation*}
A^h(v,p)= \rho_{\pm}^{h\to}\tens\cdots\tens \rho_{\pm}^{h\tfo},
\end{equation*}
where  $\Delta^{(3)}_{M(H)}(h)=h{\to} \tens h{\ot} \tens ... \tens h{\tfo}$  with $\Delta_{M(H)}(h)$ given by the coproduct of $M(H)$ in  \eqref{coproduct of M(H)} restricted to $1\tens h$. In terms of diagrams, we have }

\begin{figure}[H]
\begin{center}
\includegraphics[scale=.8]{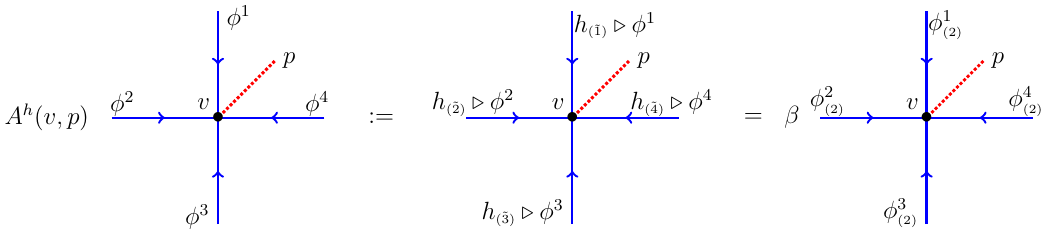}
\label{defs}
\end{center}
\end{figure}
\textit{where the  scalar $\beta\equiv\langle h,(S\phi\o^4)(S\phi\o^{3})(S\phi\o^{2})(S\phi\o^{1})\phi\thr^{1}\phi\thr^{2}\phi\thr^{3}\phi\thr^{4}\rangle$. The edges incident to the vertex $v$ are indexed counterclockwise starting from $p$. Here all the edges incident to the vertex $v$ are assumed to point towards $v$. Using the antipode to change orientation, we see that for edges orientated away from the vertex $v$, $\rho_{+}$ has to be replaced with $\rho_{-}$.}

\item[2.] \textit{  The face  operator $B^a(v,p):H^{*\tens |E|}\rightarrow H^{*\tens |E|}$ for the  face  which encodes the action of $H^{\text{cop}}$ in $M(H)$ at the site is defined by
\[
B^a(v,p)= T_{\pm}^{a\to}\tens\cdots\tens T_{\pm}^{a\tfo} 
\]
where  $\Delta^{(3)}_{M(H)}(a)=a\to\tens\cdots\tens a\tfo$ with $\Delta_{M(H)}(a)$  the coproduct of $M(H)$  \eqref{coproduct of M(H)} restricted to $a\tens 1$. More precisely, we have
}
\begin{figure}[H]
\begin{center}
\includegraphics[scale=.85]{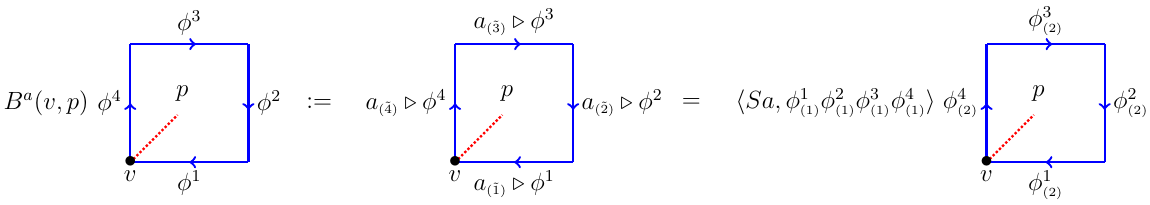}
\label{defs}
\end{center}
\end{figure}

\end{itemize}
\end{defn}


%


\bigskip

We shall now show how $\Gamma$, equipped with these operators  admits a local mirror bicrossproduct $M(H)$ representation at the sites of $\Gamma$. 
We need to show that the vertex and face operators represent their respective copies of $M(H)$ and that their commutation relations arising from common edges implement the algebra in the bicrossproduct quantum group $M(H)$.
\begin{thm}\label{thm12}
Let $H$ be a finite-dimensional Hopf algebra  satisfying $S^2 = \id$ with dual $H^*$ and the graph $\Gamma$  a square lattice as above.
 Then the operators  $A^h(v,p)$ and $ B^a(v,p) $ define an  $M(H)$ representation on $ H^{*\tens |E|}$ associated to each site $(v,p)$. Here $(a\tens h)$ acts by $A^h(v,p)\circ B^a(v,p) $, i.e. these enjoy the commutation relations 
\bes\label{AB-bicross}
 A^{h}(v,p)\circ B^{a} (v,p)&=& B^{(h\o aSh\t)}(v,p)\circ A^{h\thr}(v,p), \quad \forall h\in H,\;a\in H^{\text{cop}},\\[0.5\baselineskip]
 \label{AB-bicross1}
 A^{h}(v,p)\circ  A^{g}(v,p)&=& A^{hg}(v,p),\quad B^{a} (v,p)\circ  B^{b} (v,p) = B^{ab} (v,p).
\ees
\end{thm}

\begin{figure}[H]
\begin{center}
\includegraphics[scale=.85]{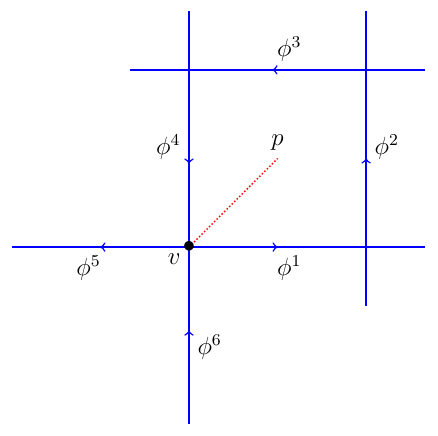}
\caption{We illustrate here a relevant subgraph of $\Gamma$, decorated by six copies of $H^{*}$ which is the relevant structure for the  proof of Theorem \ref{thm12}. Other orientations can be considered as well, but the argument follows in a similar manner. For simplicity we only focus on this graph.}
\label{arbitrary graph1}
\end{center}
\end{figure}

\newpage
\begin{proof}


Here we only show the proof of  equation \eqref{AB-bicross} and leave the proof of  \eqref{AB-bicross1} to the interested reader.
Before proceeding with the proof, for the site given in Fig. \ref{arbitrary graph1}, we note the following: 
\begin{enumerate}
\item
There are four edges connected to the vertex $v$ of Figure \ref{arbitrary graph1} and this requires we compute  $\Delta^{(3)}$ of $M(H)$ to use in the geometric operators. 
\be 
\label{1third coproduct} 
\Delta^{(3)}(a\otimes h) = a\fo \otimes h\fo\otimes a\thr h\thr Sh\fiv\otimes h\si\otimes a\t h\t Sh\sev\otimes h\ei\tens a\o h\o Sh\Nine\tens h\ten
\ee
from which we derive that
\bes
\label{1thirdcoproduct} 
\Delta^{(3)}(a\otimes 1) &=& a\fo \otimes 1\otimes a\thr \otimes 1\otimes a\t \otimes 1\tens a\o \tens  1\nonumber\\
\Delta^{(3)}(1\otimes h) &=& 1 \otimes h\fo\otimes  h\thr Sh\fiv\otimes h\si\otimes  h\t Sh\sev\otimes h\ei\tens  h\o Sh\Nine\tens h\ten.
\ees
These are the relevant expressions to use in the definition of the face and vertex operators.  
\item The face operator associated with Figure \ref{arbitrary graph1} is  defined as
\be
B^{a}(v,p) ( \phi^{1}\otimes...\otimes \phi^{6}) =  T^{a\fo}_{-} (\phi^{1})\tens T^{a\thr}_{-}(\phi^{2})\tens T^{a
\t}_{-}(\phi^{3})\tens T^{a\o}_{-}(\phi^{4})\tens\phi^{5}\tens \phi^{6}
\ee
and the vertex operator is 
\be 
\label{verx}
A^{h}(v,p)( \phi^{1}\otimes...\otimes \phi^{6}) = \rho^{h\to}_{-}(\phi^1)\tens \phi^2\tens \phi^3 \tens \rho^{h\ot}_{+}(\phi^4)\tens \rho^{h\tthr}_{-}(\phi^5)\tens \rho^{h\tfo}_{+}(\phi^6)
\ee
where $\rho_{\pm}$ are  expressed in terms of the triangle operators as follows:
\bes
\rho^{h\to}_{-}& \equiv& \rho^{1\tens h\fo}_{-} = L^{h\fo}_{-}, \qquad \qquad \rho^{h\ot}_{+}\equiv \rho^{h\thr Sh\fiv\otimes h\si}_{+}= T^{h\thr Sh\fiv}_{+}L^{h\si}_{+} \nonumber\\[0.5\baselineskip]
\rho^{h\tthr}_{-} &\equiv & \rho^{h\t Sh\sev\otimes h\ei}_{-}= T^{h\t Sh\sev}_{-}L^{h\ei}_{-},\qquad \rho^{\tfo}_{+}\equiv \rho^{h\o Sh\Nine\tens h\ten}_{+}= T^{h\o Sh\Nine}_{+}L^{h\ten}_{+}.\nonumber
\ees
Note that $\rho_{-}$ are computed from \eqref{rhominus}.
The vertex operator \eqref{verx} then reads
\bes
A^{h}(v,p)( \phi^{1}\otimes...\otimes \phi^{6})&=& L^{ h\fo}_{-}(\phi^{1})\otimes \phi^2\tens\phi^{3} \tens T^{h\thr Sh\fiv}_{+}L^{h\si}_{+}(\phi^{4}) \nonumber\\[0.5\baselineskip]
&\quad &\,\otimes\, T^{h\t Sh\sev}_{-}L^{h\ei}_{-}(\phi^{5}) \otimes\, T_{+}^{h\o Sh\Nine}L^{h\ten}_{+}(\phi^{6}).
\ees
\end{enumerate}

\newpage
The proof follows a direct  calculation to compare the LHS and RHS of \eqref{AB-bicross}. 
%
Let $h\in H$, $a\in H^{\text{cop}}$ and $\phi^{i}\in H^{*}$, where $i\in \lbrace 1,..,6\rbrace$. Starting with the LHS of equation \eqref{AB-bicross}, we have
\bes
\label{1BC5}
&&A^{h}(v,p)B^{a}(v,p)( \phi^{1}\otimes...\otimes \phi^{6})
= A^{h}(v,p)\left(T^{a\fo}_{-} (\phi^{1})\tens T^{a\thr}_{-}(\phi^{2})\tens T^{a
\t}_{-}(\phi^{3})\tens T^{a\o}_{-}(\phi^{4})\tens\phi^{5}\tens \phi^{6}\right) \nonumber\\[0.3\baselineskip]
&=& L^{h\fo}_{-}T^{a\fo}_{-} (\phi^{1})\tens  T^{a\thr}_{-}(\phi^{2})\tens T^{a
\t}_{-}(\phi^{3})\tens T^{h\thr Sh\fiv}_{+}L^{h\si}_{+}\left( T^{a\o}_{-}(\phi^{4})\right) \nonumber\\[0.3\baselineskip]
&& \otimes\, T^{h\t Sh\sev}_{-}L^{h\ei}_{-}(\phi^{5}) \otimes\, T_{+}^{h\o Sh\Nine}L^{h\ten}_{+}(\phi^{6}) \nonumber\\[0.3\baselineskip]
&=& \langle a\fo,\phi\fo^{1}\rangle\langle h\fo,\phi\thr^{1}S^{-1}\phi\o^{1}\rangle\phi\t^{1}\tens\, \langle a\thr,\phi\t^{2}\rangle \phi\o^{2}\tens\, \langle a\t,\phi\t^{3}\rangle\phi\o^{3}\nonumber\\[0.3\baselineskip]
&& \tens\,\langle a\o,\phi\si^{4}\rangle\langle h\si, S\phi\o^{4}\phi\fiv^{4}\rangle\langle h\fiv,\phi\t^{4}\rangle\langle Sh\thr, \phi\thr^{4}\rangle\phi\fo^{4} \nonumber\\[0.3\baselineskip]
&&\tens\,\langle h\ei,\phi\fiv^{5}S^{-1}\phi\o^{5}\rangle\langle h\t,\phi\thr^{5}\rangle\langle Sh\sev,\phi\fo^{5}\rangle\phi\t^{5} \tens\,\langle h\ten,S\phi\o^{6}\phi\fiv^{6}\rangle\langle h\Nine, \phi\t^{6}\rangle\langle Sh\o,\phi\thr^{6}\rangle\phi\fo^{6} \nonumber\\[0.3\baselineskip]
&=& \langle a\fo,\phi_{\t\t}^{1}\rangle\langle h\fo,\phi_{\t\o}^{1}\rangle\langle S^{-1}h\fiv, \phi_{\o\o}^{1}\rangle\phi_{\o\t}^{1}\tens\, \langle a\thr,\phi\t^{2}\rangle \phi\o^{2}\tens\, \langle a\t,\phi\t^{3}\rangle\phi\o^{3} \nonumber\\[0.3\baselineskip]
&& \tens\,\langle a\o,\phi\si^{4}\rangle\langle h\si,\phi\t^{4}\rangle\langle h\sev, S\phi\o^{4}\phi\fiv^{4}\rangle\langle Sh\thr, \phi\thr^{4}\rangle\phi\fo^{4}\nonumber\\[0.3\baselineskip]
&& \tens\,\langle h\Nine,\phi\fiv^{5}S^{-1}\phi\o^{5}\rangle\langle h\t,\phi\thr^{5}\rangle\langle h\ei,S\phi\fo^{5}\rangle\phi\t^{5} \tens\,\langle h\ele,S\phi\o^{6}\phi\fiv^{6}\rangle\langle h\ten, \phi\t^{6}\rangle\langle Sh\o,\phi\thr^{6}\rangle\phi\fo^{6} \nonumber\\[0.3\baselineskip]
&=& \langle a\fo,\phi\thr^{1}\rangle\langle S^{-1}h\fiv h\fo, \phi\o^{1}\rangle\phi\t^{1}\tens\, \langle a\thr,\phi\t^{2}\rangle \phi\o^{2}\tens\, \langle a\t,\phi\t^{3}\rangle\phi\o^{3}  \nonumber\\[0.3\baselineskip]
&& \tens\,\langle a\o,\phi\si^{4}\rangle\langle h\si,\phi\t^{4} S\phi\o^{4}\phi\fiv^{4}\rangle\langle Sh\thr, \phi\thr^{4}\rangle\phi\fo^{4}\nonumber\\[0.3\baselineskip]
&& \tens\,\langle h\t,\phi\thr^{5}\rangle\langle h\sev,S\phi\fo^{5}\phi\fiv^{5}S^{-1}\phi\o^{5}\rangle\phi\t^{5}\nonumber  \tens\langle h\ei, \phi\t^{6}S\phi\o^{6}\phi\fiv^{6}\rangle\langle Sh\o,\phi\thr^{6}\rangle\phi\fo^{6}\nonumber\\[0.3\baselineskip]
&=& \langle a\fo,\phi\t^{1}\rangle\phi\o^{1} \epsilon(h\fo)\tens\, \langle a\thr,\phi\t^{2}\rangle \phi\o^{2}\tens\, \langle a\t,\phi\t^{3}\rangle\phi\o^{3} \tens\,\langle a\o,\phi\fo^{4}\rangle\langle h\fiv,\phi\thr^{4}\rangle\langle Sh\thr, \phi\o^{4}\rangle\phi\t^{4}\nonumber\\[0.3\baselineskip]
&& \tens\,\langle h\t,\phi\thr^{5}\rangle\langle h\si,S^{-1}\phi\o^{5}\rangle\phi\t^{5}\nonumber  \tens\,\langle h\sev, \phi\thr^{6}\rangle\langle Sh\o,\phi\o^{6}\rangle\phi\t^{6}\nonumber\\[0.3\baselineskip]
&=& \langle a\fo,\phi\t^{1}\rangle\phi\o^{1} \tens\, \langle a\thr,\phi\t^{2}\rangle \phi\o^{2}\tens\, \langle a\t,\phi\t^{3}\rangle\phi\o^{3} \tens\,\langle a\o,\phi_{\t\t}^{4}\rangle\langle h\fo,\phi_{\t\o}^{4}\rangle\langle Sh\thr, \phi_{\o\o}^{4}\rangle\phi_{\o\t}^{4}\nonumber\\[0.3\baselineskip]
&& \tens\,\langle h\t,\phi\thr^{5}\rangle\langle h\fiv,S^{-1}\phi\o^{5}\rangle\phi\t^{5}\nonumber  \tens\,\langle h\si, \phi\thr^{6}\rangle\langle Sh\o,\phi\o^{6}\rangle\phi\t^{6}\nonumber\\[0.3\baselineskip]
&=& \langle a\fo,\phi\t^{1}\rangle\phi\o^{1} \tens\, \langle a\thr,\phi\t^{2}\rangle \phi\o^{2}\tens\, \langle a\t,\phi\t^{3}\rangle\phi\o^{3} \tens\,\langle a\o,\phi\t^{4}\rangle\phi\o^{4}\epsilon(h\thr)\nonumber\\[0.3\baselineskip]
&& \tens\,\langle h\t,\phi\thr^{5}\rangle\langle h\fo,S^{-1}\phi\o^{5}\rangle\phi\t^{5}\nonumber  \tens\,\langle h\fiv, \phi\thr^{6}\rangle\langle Sh\o,\phi\o^{6}\rangle\phi\t^{6}\nonumber\\[0.3\baselineskip]
&=& \langle a,\phi\t^{1}\phi\t^{2}\phi\t^{3}\phi\t^{4}\rangle \langle h\t,\phi\thr^{5}\rangle\langle h\thr,S^{-1}\phi\o^{5}\rangle\langle Sh\o,\phi\o^{6}\rangle\langle h\fo,\phi\thr^{6}\rangle \nonumber\\[0.3\baselineskip]
&& \,\phi\o^{1}\otimes \phi\o^2\tens\phi\o^{3} \tens \phi\o^{4} \tens \phi\t^{5}\tens\phi\t^{6}.
\ees
Using the first part of \eqref{1thirdcoproduct}, we calculate
\be 
\Delta^3\left(h\o aSh\t\right)= h\fo a\fo Sh\fiv \tens h\thr a\thr Sh\si \tens h\t a\t Sh\sev \tens h\o a\o Sh\ei 
\ee 
and this will be used in computing the RHS of  \eqref{AB-bicross}. 
The RHS of equation \eqref{AB-bicross}, is computed as follows

\bes
&& B^{h\o aSh\t}(v,p)A^{h\thr}(v,p)(\phi^{1}\otimes...\otimes \phi^{6})\nonumber\\
&=& B^{h\o aSh\t}(v,p)\left(L^{h\si}_{-}(\phi^{1})\otimes \phi^2\tens\phi^{3} \tens T^{h\fiv Sh\sev}_{+}L^{h\ei}_{+}(\phi^{4})\,\otimes T^{h\fo Sh\Nine}_{-}L^{h\ten}_{-}(\phi^{5}) \otimes\, T_{+}^{h\thr Sh\ele}L^{h\twe}_{+}(\phi^{6})\right)\nonumber\\[0.3\baselineskip]
&=& T^{h\fo a\fo Sh\fiv}_{-} L^{h\twe}_{-}(\phi^{1})\tens T^{ h\thr a\thr Sh\si}_{-}(\phi^{2})\tens T^{ h\t a\t Sh\sev}_{-}(\phi^{3})\tens T^{h\o a\o Sh\ei}_{-} T^{h\ele Sh\thi}_{+}L^{h\fou}_{+}(\phi^{4})\nonumber\\[0.3\baselineskip]
&& \tens\, T^{h\ten Sh\fif}_{-}L^{h\sixt}_{-}(\phi^{5})\tens T^{h\Nine Sh\sevt}_{+}L^{h\eit}_{+}(\phi^5)\nonumber\\[0.3\baselineskip]
&=& \langle h\twe, \phi\fo^{1}S^{-1}\phi\o^{1}\rangle\langle h\fo a\fo Sh\fiv,\phi\thr^{1} \rangle\phi\t^{1}\,\tens\, \langle h\thr,\phi\t^{2}\rangle\langle a\thr,\phi\thr^{2}\rangle\langle Sh\si,\phi\fo^{2}\rangle\phi\o^{2}\nonumber\\[0.3\baselineskip]
&& \tens\,\langle h\t, \phi\t^{3}\rangle\langle a\t,\phi\thr^3\rangle \langle Sh\sev,\phi\fo^{3}\rangle\phi\o^{3}\,\tens\,  \langle h\fou, S\phi\o^{4}\phi\fiv^{4}\rangle \langle h\thi Sh\ele, \phi\t^{4} \rangle\langle h\o a\o Sh\ei, \phi\fo^{4}\rangle\phi\thr^{4}\nonumber\\[0.3\baselineskip]
&& \tens\,\langle h\sixt,\phi\fiv^{5}S^{-1}\phi\o^{5}\rangle\langle h\ten, \phi\thr^{5}\rangle\langle Sh\fif,\phi\fo^{5}\rangle\phi\t^{5}\,\tens\, \langle h\eit, S\phi\o^{6}\phi\fiv^{6}\rangle\langle h\sevt,\phi\t^{6}\rangle\langle Sh\Nine, \phi\thr^{6}\rangle\phi\fo^{6}
\nonumber\\[0.3\baselineskip]
&=&  \langle h\twe, \phi\fo^{1}S^{-1}\phi\o^{1}\rangle\langle h\fo a\fo Sh\fiv,\phi\thr^{1} \rangle \phi\t^{1}\,\tens\,  \langle h\thr,\phi\t^{2}\rangle\langle a\thr,\phi\thr^{2}\rangle\langle Sh\si,\phi\fo^{2}\rangle\phi\o^{2}\nonumber\\[0.3\baselineskip]
&& \tens\,\langle h\t, \phi\t^{3}\rangle\langle a\t,\phi\thr^3\rangle \langle Sh\sev,\phi\fo^{3}\rangle\phi\o^{3}\,\tens\,  \langle h\fou, S\phi\o^{4}\phi\fiv^{4}\rangle \langle h\thi Sh\ele, \phi\t^{4} \rangle\langle h\o a\o Sh\ei, \phi\fo^{4}\rangle\phi\thr^{4}\nonumber\\[0.3\baselineskip]
&& \tens\,\langle h\fif,S\phi\fo^{5}\rangle\langle h\sixt,\phi\fiv^{5}S^{-1}\phi\o^{5}\rangle\langle h\ten, \phi\thr^{5}\rangle\phi\t^{5}\,\tens\,\langle h\sevt,\phi\t^{6}\rangle \langle h\eit, S\phi\o^{6}\phi\fiv^{6}\rangle\langle Sh\Nine, \phi\thr^{6}\rangle\phi\fo^{6}
\nonumber\\[0.3\baselineskip]
&=&  \langle h\twe, (\phi\t^{1})\t(S^{-1}\phi\o^{1})\t\rangle\langle h\fo a\fo Sh\fiv,(\phi\t^{1})\o \rangle S(S^{-1}\phi\o^{1})\o\,\tens\,  \langle h\thr,\phi\t^{2}\rangle\langle a\thr,\phi\thr^{2}\rangle\langle Sh\si,\phi\fo^{2}\rangle\phi\o^{2}\nonumber\\[0.3\baselineskip]
&& \tens\,\langle h\t, \phi\t^{3}\rangle\langle a\t,\phi\thr^3\rangle \langle Sh\sev,\phi\fo^{3}\rangle\phi\o^{3}\,\tens\,  \langle h\thi,\phi\fo^{4}\rangle \langle  Sh\ele, \phi\o^{4} \rangle\langle h\o a\o Sh\ei, \phi\thr^{4}\rangle\phi\t^{4}\nonumber\\[0.3\baselineskip]
&& \tens\,\langle h\fou,S^{-1}\phi\o^{5}\rangle\langle h\ten, \phi\thr^{5}\rangle\phi\t^{5}\,\tens\,\langle h\fif,\phi\thr^{6}\rangle \langle Sh\Nine, \phi\o^{6}\rangle\phi\t^{6}\nonumber\\[0.3\baselineskip]
&=&  \langle h\twe, (\phi\t^{1})\t(S^{-1}\phi\o^{1})\t\rangle\langle h\fo a\fo Sh\fiv,(\phi\t^{1})\o \rangle S(S^{-1}\phi\o^{1})\o\,\tens\,  \langle h\thr,\phi\t^{2}\rangle\langle a\thr,\phi\thr^{2}\rangle\langle Sh\si,\phi\fo^{2}\rangle\phi\o^{2}\nonumber\\[0.3\baselineskip]
&& \tens\,\langle h\t, \phi\t^{3}\rangle\langle a\t,\phi\thr^3\rangle \langle Sh\sev,\phi\fo^{3}\rangle\phi\o^{3}\,\tens\,  \langle h\thi,\phi\fo^{4}\rangle \langle  Sh\ele, \phi\o^{4} \rangle\langle h\o a\o Sh\ei, \phi\thr^{4}\rangle\phi\t^{4}\nonumber\\[0.3\baselineskip]
&& \tens\,\langle h\fou,S^{-1}\phi\o^{5}\rangle\langle h\ten, \phi\thr^{5}\rangle\phi\t^{5}\,\tens\,\langle h\fif,\phi\thr^{6}\rangle \langle Sh\Nine, \phi\o^{6}\rangle\phi\t^{6}\nonumber\\[0.3\baselineskip]
&=& \langle h\fo a\fo Sh\fiv,\phi\t^{1} \rangle \phi\o^{1}\,\tens\,  \langle h\thr,\phi\t^{2}\rangle\langle a\thr,\phi\thr^{2}\rangle\langle Sh\si,\phi\fo^{2}\rangle\phi\o^{2}\nonumber\\[0.3\baselineskip]
&& \tens\,\langle h\t, \phi\t^{3}\rangle\langle a\t,\phi\thr^3\rangle \langle Sh\sev,\phi\fo^{3}\rangle\phi\o^{3}\,\tens\,  \langle h\twe,\phi\fo^{4}\rangle \langle  Sh\ele, \phi\o^{4} \rangle\langle h\o a\o Sh\ei, \phi\thr^{4}\rangle\phi\t^{4}\nonumber\\[0.3\baselineskip]
&& \tens\,\langle h\thi,S^{-1}\phi\o^{5}\rangle\langle h\ten, \phi\thr^{5}\rangle\phi\t^{5}\,\tens\,\langle h\fou,\phi\thr^{6}\rangle \langle Sh\Nine, \phi\o^{6}\rangle\phi\t^{6}
\nonumber\\[0.3\baselineskip]
&=& \langle h\fo, \phi_{\o\t}^{1}\rangle\langle a\fo,\phi_{\t\o}^{1}\rangle\langle Sh\fiv,\phi_{\t\t}^{1}\rangle \phi_{\o\o}^{1}\,\tens\,  \langle h\thr,\phi\t^{2}\rangle\langle a\thr,\phi\thr^{2}\rangle\langle Sh\si,\phi\fo^{2}\rangle\phi\o^{2}\nonumber\\[0.3\baselineskip]
&& \tens\,\langle h\t, \phi\t^{3}\rangle\langle a\t,\phi\thr^3\rangle \langle Sh\sev,\phi\fo^{3}\rangle\phi\o^{3}\,\tens\,  \langle h\twe,\phi\fo^{4}\rangle \langle  Sh\ele, \phi\o^{4} \rangle\langle h\o a\o Sh\ei, \phi\thr^{4}\rangle\phi\t^{4}\nonumber\\[0.3\baselineskip]
&& \tens\,\langle h\thi,S^{-1}\phi\o^{5}\rangle\langle h\ten, \phi\thr^{5}\rangle\phi\t^{5}\,\tens\,\langle h\fou,\phi\thr^{6}\rangle \langle Sh\Nine, \phi\o^{6}\rangle\phi\t^{6}\nonumber\\[0.3\baselineskip]
&=& \langle a\fo,\phi\t^{1}\rangle\phi\o^{1} \,\tens\,  \langle h\thr,\phi_{\o\t}^{2}\rangle\langle a\thr,\phi_{\t\o}^{2}\rangle\langle Sh\fo,\phi_{\t\t}^{2}\rangle\phi_{\o\o}^{2}\nonumber\\[0.3\baselineskip]
&& \tens\,\langle h\t, \phi\t^{3}\rangle\langle a\t,\phi\thr^3\rangle \langle Sh\fiv,\phi\fo^{3}\rangle\phi\o^{3}\,\tens\,  \langle h\ten,\phi\fo^{4}\rangle \langle  Sh\Nine, \phi\o^{4} \rangle\langle h\o a\o Sh\si, \phi\thr^{4}\rangle\phi\t^{4}\nonumber\\[0.3\baselineskip]
&& \tens\,\langle h\ele,S^{-1}\phi\o^{5}\rangle\langle h\ei, \phi\thr^{5}\rangle\phi\t^{5}\,\tens\,\langle h\twe,\phi\thr^{6}\rangle \langle Sh\sev, \phi\o^{6}\rangle\phi\t^{6}
\nonumber\\[0.3\baselineskip]
&=& \langle a\fo,\phi\t^{1}\rangle\phi\o^{1} \,\tens\, \langle a\thr,\phi\t^{2}\rangle\phi\o^{2} \tens\,\langle a\t,\phi_{\t\o}^3\rangle\langle h\t, \phi_{\o\t}^{3}\rangle \langle Sh\thr,\phi_{\t\t}^{3}\rangle\phi_{\o\o}^{3}
\nonumber\\[0.3\baselineskip]
&& \tens\,  \langle h\ei,\phi\si^{4}\rangle \langle  Sh\sev, \phi\o^{4} \rangle\langle h\o, \phi\thr^{4}\rangle\langle a\o,\phi\fo^{4}\rangle\langle Sh\fo,\phi\fiv^{4}\rangle\phi\t^{4}
\nonumber\\[0.3\baselineskip]
&& \tens\,\langle h\Nine,S^{-1}\phi\o^{5}\rangle\langle h\si, \phi\thr^{5}\rangle\phi\t^{5}\,\tens\,\langle h\ten,\phi\thr^{6}\rangle \langle Sh\fiv, \phi\o^{6}\rangle\phi\t^{6}.
\ees

It follows that
\bes
\label{1RHSthm}
&& B^{h\o aSh\t}(v,p)A^{h_{\thr}}(v,p)(\phi^{1}\otimes...\otimes \phi^{6})= \langle a\fo,\phi\t^{1}\rangle\phi\o^{1} \,\tens\, \langle a\thr,\phi\t^{2}\rangle\phi\o^{2} \tens\,\langle a\thr,\phi\t^{3}\rangle\phi\o^{3} 
\nonumber\\[0.3\baselineskip]
&&\tens\, \langle a\o,\phi_{\t\t}^{4}\rangle \langle h\si,\phi_{\thr\t}^{4}\rangle \langle  Sh\fiv, \phi_{\o\o}^{4} \rangle\langle h\o, \phi_{\t\o}^{4}\rangle\langle Sh\t,\phi_{\thr\o}^{4}\rangle\phi_{\o\t}^{4}\nonumber\\[0.3\baselineskip]
&& \tens\,\langle h\sev,S^{-1}\phi\o^{5}\rangle\langle h\fo, \phi\thr^{5}\rangle\phi\t^{5}\,\tens\,\langle h\ei,\phi\thr^{6}\rangle \langle Sh\thr, \phi\o^{6}\rangle\phi\t^{6}
\nonumber\\[0.3\baselineskip]
&=& \langle a\fo,\phi\t^{1}\rangle\phi\o^{1} \,\tens\, \langle a\thr,\phi\t^{2}\rangle\phi\o^{2} \tens\,\langle a\thr,\phi\t^{3}\rangle\phi\o^{3}\, \tens\, \langle a\o,\phi\thr^{3}\rangle \langle h\fo,\phi\fo^{4}\rangle \langle  Sh\thr, \phi\o^{4} \rangle\phi\t^{4}\nonumber\\[0.3\baselineskip]
&& \tens\,\langle h\fiv,S^{-1}\phi\o^{5}\rangle\langle h\t, \phi\thr^{5}\rangle\phi\t^{5}\,\tens\,\langle h\si,\phi\thr^{6}\rangle \langle Sh\o, \phi\o^{6}\rangle\phi\t^{6}
\nonumber\\[0.3\baselineskip]
&=& \langle a\fo,\phi\t^{1}\rangle\phi\o^{1} \,\tens\, \langle a\thr,\phi\t^{2}\rangle\phi\o^{2} \tens\,\langle a\thr,\phi\t^{3}\rangle\phi\o^{3}\, \tens\, \langle a\o,\phi_{\t\o}^{3}\rangle \langle  h\thr, (S\phi\o^{4})\t \phi_{\t\t}^{4}\rangle S^{-1}(S\phi\o^4)\o\nonumber\\[0.3\baselineskip]
&& \tens\,\langle h\fo,S^{-1}\phi\o^{5}\rangle\langle h\t, \phi\thr^{5}\rangle\phi\t^{5}\,\tens\,\langle h\fiv,\phi\thr^{6}\rangle \langle Sh\o, \phi\o^{6}\rangle\phi\t^{6}\nonumber\\[0.3\baselineskip]
&=& \langle a\fo,\phi\t^{1}\rangle\phi\o^{1} \,\tens\, \langle a\thr,\phi\t^{2}\rangle\phi\o^{2} \tens\,\langle a\thr,\phi\t^{3}\rangle\phi\o^{3}\, \tens\, \langle a\o,\phi\t^{3}\rangle \phi\o^{4}\nonumber\\[0.3\baselineskip]
&& \tens\,\langle h\thr,S^{-1}\phi\o^{5}\rangle\langle h\t, \phi\thr^{5}\rangle\phi\t^{5}\,\tens\,\langle h\fo,\phi\thr^{6}\rangle \langle Sh\o, \phi\o^{6}\rangle\phi\t^{6}\nonumber\\[0.3\baselineskip]
&=& \langle a,\phi\t^{1}\phi\t^{2}\phi\t^{3}\phi\t^{4}\rangle \langle h\t,\phi\thr^{5}\rangle\langle h\thr,S^{-1}\phi\o^{5}\rangle\langle Sh\o,\phi\o^{6}\rangle\langle h\fo,\phi\thr^{6}\rangle \nonumber\\[0.3\baselineskip]
&& \,\phi\o^{1}\otimes \phi\o^2\tens\phi\o^{3} \tens \phi\o^{4} \tens \phi\t^{5}\tens\phi\t^{6}.
\ees
Finally we see equations \eqref{1BC5} and \eqref{1RHSthm} are the same, and hence this proves Theorem \ref{thm12}  
for the specific graph in \ref{arbitrary graph1}.  The variations according to different orientations proceed similarly and are omitted.

\end{proof}

When the geometric operators do not act at the same site, they essentially commute as stated in the following proposition.
\begin{pro}
\label{ABproperties}\textit{ 
Let  $\Gamma$ be the square lattice described above, and $h,g\in H$ and $a,b\in H^{\text{cop}}$.
\begin{itemize}
\item[(i)] For all  sites,  $A^{h}(v,p) \circ A^{g}(w,q) =A^{g}(w,q) \circ A^{h}(v,p)$ provided the vertices $v$ and $w$ do not coincide, i.e., $v\neq w$ and there are no edges connecting to $v$ and $w$ directly.
\item[(ii)] For all sites, $B^{a}(v,p)\circ B^{b}(w,q)= B^{b}(w,q)\circ B^{a}(v,p)$ if the faces $p$ and $q$ do not coincide, i.e., $p\neq q$.
\item[(iii)] At disjoint sites, {$v\neq v'$ and $p\neq p'$}, $A^{h}(v,p)\circ B^{b}(v',p')=B^{b}(v',p')\circ A^{h}(v,p)$.
\end{itemize}
}
\end{pro}
\begin{proof} 
 Before we prove the above proposition, we  note
 triangle operators \eqref{triangle operators} obey the following commutation relations 
\begin{eqnarray}
\label{triangle operators algebra}  
[T_{+}^{a},T_{-}^{b}]&=&0,\quad   L^{h}_{\pm} T^{a}_{\pm} = T^{h\o aSh\t}_{\pm} L^{h\thr}_{\pm}, \, \text{for all}\;\; h,g\in H,\;a,b\in H^{\text{cop}}.
\end{eqnarray}

(i) Consider the two graphs in Figure \ref{Lemma1} with vertices $v$ and $w$ which do not coincide and have no edges connecting them.   Then it can be shown easily that  the graph (with $p\neq q$) on the left has the following  vertex operators $A^{h}(v,p)$ and $A^{g}(w,q)$ commuting. If one considers the  graph (with $p=q$) on the right side of  Figure \ref{Lemma1}, the vertex operators are computed as 
\bes
&&A^{h}(v,p)(\phi^{1} \tens \phi^{2}\tens \phi^{3}\tens \phi^{4}\tens\psi^1\tens\psi^2\tens\psi^3\tens\psi^4)\nonumber\\[0.3\baselineskip]
&=& \langle h,(S\phi\o^4)(S\phi\o^{3})(S\phi\o^{2})(S\phi\o^{1})\phi\thr^{1}\phi\thr^{2}\phi\thr^{3}\phi\thr^{4}\rangle
\phi\t^{1} \tens \phi\t^{2}\tens \phi\t^{3}\tens \phi\t^{4}\tens\psi^1\tens\psi^2\tens\psi^3\tens\psi^4 \nonumber\\[0.3\baselineskip]
&&A^{g}(w,p)(\phi^{1} \tens \phi^{2}\tens \phi^{3}\tens \phi^{4}\tens\psi^1\tens\psi^2\tens\psi^3\tens\psi^4)\nonumber\\[0.3\baselineskip]
&=& \langle g,(S\psi\o^4)(S\psi\o^{3})(S\psi\o^{2})(S\psi\o^{1})\psi\thr^{1}\psi\thr^{2}\psi\thr^{3}\psi\thr^{4}\rangle
\phi^{1} \tens \phi^{2}\tens \phi^{3}\tens \phi^{4}\tens\psi\t^1\tens\psi\t^2\tens\psi\t^3\tens\psi\t^4. \nonumber 
\ees
A straightforward calculation shows that the above computed vertex operators commute, {essentially because the operators do not act on the same Hilbert spaces. The choice of orientation of the edges is not changing this.  A similar calculation applies for the three other cases where $v$ and $w$ are diagonally opposite on the same square, and $p$ and $q$ are different. }

%
\begin{figure}[H]
\begin{center}
\includegraphics[scale=.75]{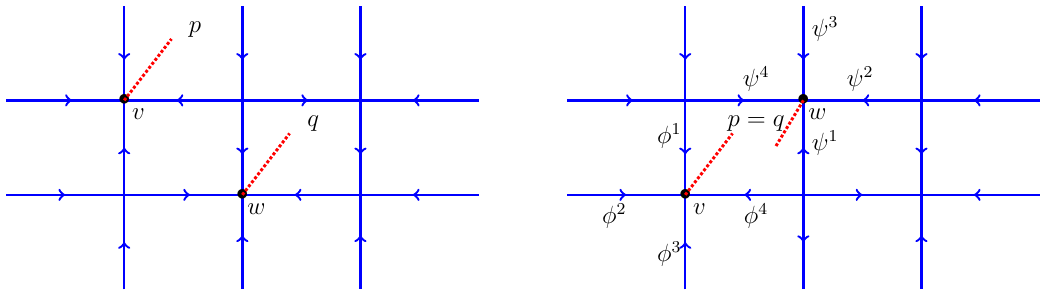}
\caption{Diagrams showing two different graphs each with vertices $v$ and $w$ which do not coincide. {We could also consider three more cases, where $v$ and $w$ are diagonally opposite and $p$ and $q$ different. }}
\label{Lemma1}
\end{center}
\end{figure}


\medskip

(ii) Given the graph on the right of Figure \ref{Lemmaiii} with  no edge(s) connecting the faces $p$ and $q$, it is easy to see that their respective  face operators $B^{a}(v,p)$ and $B^{b}(w,q)$ commute. {Indeed, in this case, the operators act on non-overlapping Hilbert spaces (regardless of the orientation).}

Let us then assume that the face $p$ and the face $q$ share at least a common edge (with dashed line) as shown by the graph on the left of Figure \ref{Lemmaiii}. Then  we only need to verify that the operators $B^{a}(v,p)$ and $B^{b}(w,q)$ commute on their shared edge. This common edge is oriented such that it is on the left of $p$ and on the right of $q$. Then $B^{a}(v,p)$ will acts on the edge through $T_{-}$ whiles $B^{b}(w,q)$ will acts on the edge through $T_{+}$. From  \eqref{triangle operators algebra} we know that $T_{-}$ and $T_{+}$ commute implying that  the operators $B^{a}(v,p)$ and $B^{b}(w,q)$ commute also. {Changing the orientation of the edges is not affecting this result.}

\medskip
(iii) Consider Figure \ref{Lemmaiii} below with two different graphs each with disjoint sites $(v,p)$ and $(v',p')$. Here by disjoint sites we mean $(v,p) \neq(v',p')$ though common edges shared by faces $p$ and $q$ are allowed leading to $p$ and $q$  adjacent to each other  as illustrated by the graph on the left of Figure \ref{Lemmaiii}.  The second graph on the right of Figure \ref{Lemmaiii}  however has no edges common to both sites.

For both graphs of  Figure \ref{Lemmaiii} their respective vertex operator for the site $(v,p)$ moving anticlockwise  are the same (changing the orientation of either graph does not change the final outcome) 
\begin{figure}[H]
\begin{center}
\includegraphics[scale=.65]{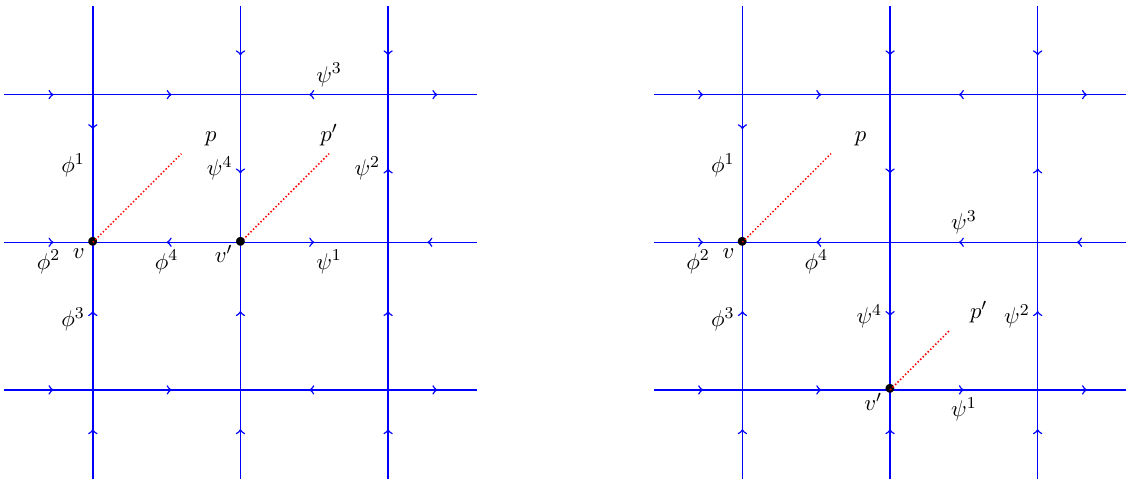}
\caption{ Diagram depicting two different graphs each with disjoint sites $(v,p)$ and $(v',p')$. }
\label{Lemmaiii}
\end{center}
\end{figure}
%
\bes 
&&A^{h}(v,p)(\phi^{1} \tens \phi^{2}\tens \phi^{3}\tens \phi^{4}\tens\psi^1\tens\psi^2\tens\psi^3\tens\psi^4)\\[0.3\baselineskip]
 &=& L^{ h\fo}_{+}(\phi^{1})\tens T^{h\thr Sh\fiv}_{+} L^{h\si}_{+}( \phi^{2})\tens T^{h\t Sh\sev}_{+}L^{h\ei}_{+}(\phi^{3})\tens T^{h\o Sh\Nine}_{+} L^{h\ten}_{+}(\phi^{4})\tens\psi^1\tens\psi^2\tens\psi^3\tens\psi^4\nonumber
\ees
and likewise the  face operator for both graphs is the same for site  $(v',p')$ moving counterclockwise   
\bes
&& B^{b}(v',p')(\phi^{1}\otimes \phi^{2}\tens\phi^{3}\tens\phi^{4}\tens\psi^1\tens\psi^2\tens\psi^3\tens\psi^4)\nonumber\\[0.3\baselineskip]
 &=& \phi^1\tens\phi^2\tens\phi^3\tens\phi^4\tens T^{b\fo}_{-}(\psi^{1})\tens  T^{b\thr}_{-}(\psi^{2})\tens T^{b\t}_{-}(\psi^{3})\tens T^{b\o}_{-}(\psi^{4}).
\ees
{These operators commute as they act on non-overlapping Hilbert spaces. Changing the orientation of the edges would not affect this. }
\end{proof}

\subsection{Hilbert space and Hamiltonian }

We are almost ready to define the Hamiltonian of the semidual  Kitaev  model. The last piece we need to define is an inner product on $H_\Gamma = H^{\tens|E|}$ making it into a Hilbert space, and to define a self-adjoint Hamiltonian on it. To this end, we henceforth work over $k= \C$ and, moreover, we suppose that $H$ is a finite- dimensional Hopf $\C^\star$-algebra e.g. as in \cite{KLS,NS}.
 
To define such Hilbert space, we require $H$ be a finite-dimensional Hopf $C^{\star}$-algebra.  This choice of $H$ permits for the definition of an inner product and a $\star$-representation for the model.    This kind of Hopf algebra comes equipped with normalized Haar integrals, structures which are natural candidates to define an inner product.  We recall that a normalised Haar integral in any Hopf algebra $H$ means $\ell\in H$ such that
\be h\cdot\ell =\ell\cdot h=\epsilon(h)\ell, \quad \text{for all } h\in H \text{ and } \epsilon(\ell)=1.\ee
Moreover, in the finite dimensional Hopf $\C^\star$ case one knows \cite{KLS,NS} that $S^2 =\id$   and there exists a unique two-sided integral $\ell\in H$ satisfying the following properties:
\be 
\label{Haar integrals}
  \ell^2 =\ell,\quad \ell^\star =\ell,\quad S(\ell)=\ell,\quad \ell \in \text{Cocom ($H$)}.
  \ee
Here Cocom($H$) means the flipped/opposite coproduct and the original coproduct of $H$ are the same, i.e., 
$\text{Cocom}(H) = \lbrace \ell\in H|\Delta^{\text{cop}}(\ell)=\Delta(\ell)\rbrace$.

\medskip

By virtue of duality, $H^{*}$ is also a finite-dimensional  Hopf $C^{\star}$-algebra.
We define the 
 extended Hilbert  space  $\mathcal{H}_{{\Gamma}}$ for the model by 
\be
\label{hilbert} \mathcal{H}_{{\Gamma}}=\bigotimes_{e\in \Gamma} H^{*},
\ee
where we define the tensor product inner product structure. Here the non-degenerate Hermitian inner product on each $H^{*}$ is defined by \cite{KLS,NS}
\begin{equation}
\label{inner product}
\langle \phi|\psi\rangle_{H^{*}} := \<\ell, \phi^{\star}\psi\>, \hspace{0.4cm}\phi,\psi\in H^{*},
\end{equation}
where $\ell$ is the normalized Haar integral of $H$.

\medskip

The inner product \eqref{inner product} allows to define the adjoint of  the triangle operators $L_{\pm}$ and $T_{\pm}$ 
\[ (L_{\pm}^{h})^{\dagger}=L_{\pm}^{h^{\star}},\qquad (T_{\pm}^{a})^{\dagger}=T_{\pm}^{a^{\star}}. \]
For example, we check this for $T_{-}^{a}$ as follows:
\bes
\langle \phi|T^{a}_{-}(\psi)\rangle_{H^{*}} &=& \langle \ell,\phi^{\star}T^{a}_{-}(\psi)\rangle = \langle \ell,\phi^{\star}\langle a,\psi\t\rangle\psi\o\rangle =\langle \ell,\langle a\thr,\psi\t\rangle\langle a\t Sa\o,\phi\t^{*}\rangle\phi\o^{*}\psi\o\rangle\nonumber\\
&=& \langle \ell,\langle a\thr,\psi\t\rangle\langle a\t,\phi\t^{*}\rangle\langle Sa\o,\phi\thr^{*}\rangle\phi\o^{*}\psi\o\rangle\nonumber\\
&=& \langle \ell,\langle a\t,\psi\t\phi\t^{*}\rangle\langle Sa\o,\phi\thr^{*}\rangle\phi\o^{*}\psi\o\rangle\nonumber\\
&=& \langle \ell,\langle a\t,(\psi\phi\o^{*})\t\rangle\langle Sa\o,\phi\t^{*}\rangle(\phi\o^{*}\psi)\o\rangle\nonumber\\
&=& \langle \ell,\langle a\t,\phi\t^{*}\rangle\phi\o^{*}\psi\rangle = \langle T_{-}^{a^{*}}(\phi)|\psi\rangle.\nonumber
\ees
Similarly this holds  for $L_\pm$ and $T_+$.
Consequently,  the adjoint operators of $A^{h}$ and $B^{a}$  are respectively
\begin{equation}
\label{Hs3}
(A^h(v,p))^{\dagger} = A^{h^{\star}}(v,p),\quad 
(B^a(v,p))^{\dagger} = B^{a^{\star}}(v,p).
\end{equation}

We intend to build the Hamiltonian from the geometric operators. By construction, we want an Hamiltonian which is hermitian. Hence we would like to restrict the geometric operators so that they become hermitian. 

One way to proceed is to use the Haar integral which will have two main advantages. First  by construction $\ell^\star=\ell$ so it will ensure that the operators are hermitian. Second, because $\ell^2=\ell$, the geometric operators will become projectors. 
The choice of the Haar integral is in fact   key in proving that the model will be topological, in the quantum double case \cite{BMCA}. 
%
\begin{lemma}
\label{lemmaproj}\textit{ Let  $\ell\in H$, $k\in H^{\text{cop}}$ be normalized Haar integrals of the finite-dimensional Hopf $C^{\star}$-algebras $H$ and $H^{\text{cop}}$ respectively. The vertex and face operators 
$A^\ell(v,p), B^k(v,p) :H^{*\tens |E|}\rightarrow H^{*\tens |E|}$ form a set of commuting Hermitian projectors. 
}
\end{lemma}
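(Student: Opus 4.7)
The plan is to split the statement into its four content components—Hermiticity, idempotence, mutual commutativity, and site-independence—and address each in turn using the algebra of geometric operators established in Theorem \ref{thm12} together with the defining absorbing property of the normalized Haar integrals $l\in H$ and $k\in H^{\text{cop}}$.

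Hermiticity is immediate: in a finite-dimensional Hopf $\star$-algebra the normalized Haar integrals satisfy $l^\star = l$ and $k^\star = k$, so \eqref{Hs3} gives $(A^l_v)^\dagger = A^{l^\star}_v = A^l_v$ and $(B^k_p)^\dagger = B^k_p$. For idempotence, Theorem \ref{thm12} identifies $A^h$ with the action of $1\tens h$ and $B^a$ with the action of $a\tens 1$ in $H^{\text{cop}}\lrbicross H$; since $H$ and $H^{\text{cop}}$ embed as sub-Hopf algebras of the bicrossproduct, the identities $(1\tens h)(1\tens g) = 1\tens hg$ and $(a\tens 1)(b\tens 1) = ab\tens 1$ yield $A^h\circ A^g = A^{hg}$ and $B^a \circ B^b = B^{ab}$. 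The Haar absorption $l\cdot l = \epsilon(l) l = l$ and $k\cdot k = k$ then give $(A^l_v)^2 = A^l_v$ and $(B^k_p)^2 = B^k_p$.

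Mutual commutativity at non-coincident sites is supplied directly by Lemma \ref{ABproperties}. At a common site I would start from the bicrossproduct relation $A^l\circ B^k = B^{l\o k Sl\t}\circ A^{l\thr}$ from \eqref{AB-bicross}. The key algebraic input is the adjoint-triviality identity $h\o k S h\t = \epsilon(h) k$, valid for every $h\in H$ because $k$ is a two-sided Haar integral: indeed $h\o k Sh\t = \epsilon(h\o) k Sh\t = k S(\epsilon(h\o) h\t) = k S(h) = \epsilon(h) k$. Coassociativity $\Delta^{(2)}(l) = (\Delta\tens\id)\Delta(l)$ then collapses the right-hand side to $B^{\epsilon(l\o) k}\circ A^{l\t} = B^k\circ A^l$ via the counit axiom $\sum \epsilon(l\o) l\t = l$.

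For site-independence, I would argue that changing the site from $(v,p)$ to $(v,p')$ at a fixed vertex $v$ corresponds to a cyclic permutation of the tensor factors of $\Delta^{(n-1)}(l)$ appearing in $A^l_v$, with any induced orientation flips absorbed through the $L_+\leftrightarrow L_-$ substitution via the antipode; the analogous statement for $B^k_p$ is proved symmetrically. Invariance under this permutation reduces to repeated use of Haar absorption together with the self-adjointness $S(l) = l$ available in the semisimple $\star$-algebra setting. The main obstacle lies precisely here: the bicrossproduct coproduct \eqref{coproduct of M(H)} is entangled by the coaction, so $\Delta^{(n-1)}(l)$ mixes Sweedler labels and antipode insertions across distinct tensor slots, and extracting cyclic invariance demands an iterated and bookkeeping-heavy application of the Haar and $\star$ properties rather than the one-line argument that suffices for the tensor-product coproduct of the Drinfeld double.
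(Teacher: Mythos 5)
Your proposal is correct and follows essentially the same route as the paper's proof: idempotence from $A^{l}\circ A^{l}=A^{l^{2}}=A^{l}$ and $k^{2}=k$, Hermiticity from \eqref{Hs3} together with $l^{\star}=l$, $k^{\star}=k$, and commutativity and site-independence from the Haar-integral properties collected in Appendix \ref{Haar integrals}. The only place you go beyond the paper's two-line sketch is the explicit check that $l\o kSl\t\tens l\thr=k\tens l$ collapses the cross-relation \eqref{AB-bicross} to $A^{l}\circ B^{k}=B^{k}\circ A^{l}$ at a common site, a step the paper leaves implicit in the phrase ``it is clear that these projectors commute.''
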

\begin{proof}
The proof follows directly from the properties of the Haar integral outlined in  \eqref{Haar integrals}.
\[
(A^{\ell}(v,p))^2= A^{\ell^2}(v,p)=A^{\ell}(v,p)\equiv A_v, \quad (B^{k}(v,p))^2 = B^{k^2}(v,p)=B^{k}(v,p) .
\]
From \eqref{AB-bicross}, using the properties of the Haar integrals \eqref{Haar integrals}, it is clear that these projectors commute. 
\end{proof}
The vertex and face operators  depend on the cyclic ordering of the edge ends at each vertex but no longer on the starting point one has to make. This is because the Haar integrals  have the $\text{$n$-th}$ coproduct $\Delta^{(n)}(\ell)$ invariant \cite{CM} under cyclic permutations $\forall n\in\mathbb{N}$. 
Hence we can now shorten the notation
\be
A^{\ell}(v,p)\equiv A_v, \quad B^{k}(v,p)\equiv B_p.
\ee

We are now ready to  define the Hamiltonian of the theory. 
\begin{defn}\label{HamiltonianPS}
\textit{ 
 For   $\Gamma$ as above, the Hamiltonian defining  the bicrossproduct  $M(H)$ model on $H_\Gamma$  is given by
\be\label{genham}
\mH= \sum_{v\in V}(\id- A_v) +\sum_{p\in F}(\id -B_p).
\ee
 The space of ground states or protected space of the Hamiltonian \eqref{genham} is given by 
the invariant  subspace $\cP_\Gamma$ of $\mH$:
\begin{equation}\label{ground state}
\cP_\Gamma :=\lbrace \phi\in\CH_{{\Gamma}}:A_v(\phi)=\phi,\;B_p(\phi)=\phi, \; \forall v, p\rbrace.
\end{equation} 
}
\end{defn}
Since the operators $A_v$ and $ B_p$ are self-adjoint, one ensures that  the Hamiltonian is self-adjoint.



\section{Tensor network representations for semidual models}
\label{Tnetwork}
We would like now to determine a representation of the ground state $\cP_\Gamma$ of $\mH$.   
Following closely \cite{BMCA}, we construct a tensor network representation  of the ground state, parametrized by a graph $\Gamma$, of the mirror bicrossproduct  model of Section \ref{BCPmodel}.  Such representation can be useful to probe the entanglement properties of the states and define some hierarchy of states as discussed in  \cite{BMCA}.

Our starting point is to provide the diagrammatic framework for the tensor network states built on $\Gamma$ and decorated by $H^*$. 
The construction includes graphs whose underlying surface has boundaries.

\subsection{Diagrammatic scheme for tensor network states and tensor trace}

\paragraph{Graph.} The set of edges $E$ of the graph $\Gamma$ corresponding to the surface $\Sigma$ may be decomposed into a disjoint union of interior edges 
and boundary edges. 
If due to the presence of the boundary, we have open edges, i.e., not  closed faces, we can deform the graph in the appropriate way to have closed faces, as depicted in   Figure \ref{boundary faces}, which we reproduce\footnote{This is Figure 4 from \cite{BMCA}.} from \cite{BMCA}. 
%

\begin{figure}[H]
\begin{center}
\includegraphics[scale=.8]{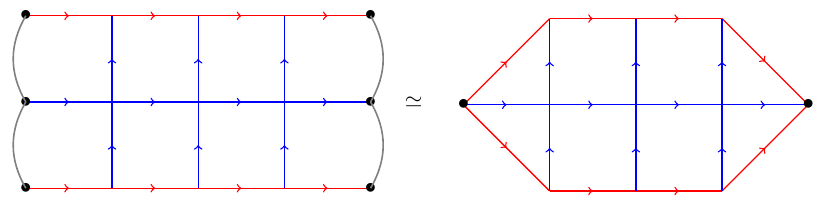}
\caption{The interior and boundary faces of a graph. Figure reproduced from \cite{BMCA}. The key-point is that some boundary faces might not be plain square plaquettes. 
} \label{boundary faces}
\end{center}
\end{figure}

Forming a set consisting of the different sections of the boundary edges and faces, a natural ordering is inherited from the orientation of the boundary of the surface. 
For our discussion, the orientation of any boundary is fixed at an anticlockwise direction with regards to the interior of the surface. 

\smallskip

\paragraph{Tensor of rank 2.} 
To  each pair of oriented edge  $e$  and face $p$, we  associate a tensor of rank 2, as indicated in Figure \ref{tensor}.
\begin{figure}[H]
\begin{center}
\includegraphics{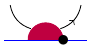}
\caption{ The black dot represents the orientation  of the edge inherited from the underlying graph. 
 The black arrows (virtual edges) attached to the tensor represents the indices of the tensor. 
The association of a tensor to each edge of $\Gamma$ also amounts to placing an anti-clockwise oriented virtual loop in each face  of the graph ${\Gamma}$. A virtual loop determines a face $p\in F$. This tensor has two legs hence it is seen as  a tensor of rank 2.}
\label{tensor}
\end{center}
\end{figure}
Rank 2 tensors can be tensored to generate higher rank tensors and then contracted to  follow the combinatorics of the graph $\Gamma$. They form in this case a \textit{tensor network}, which is then a (complex) number since all legs are contracted, such as in Fig. \ref{Full-tensor1}. 
\begin{figure}[H]
\begin{center}
\includegraphics[scale=.8]{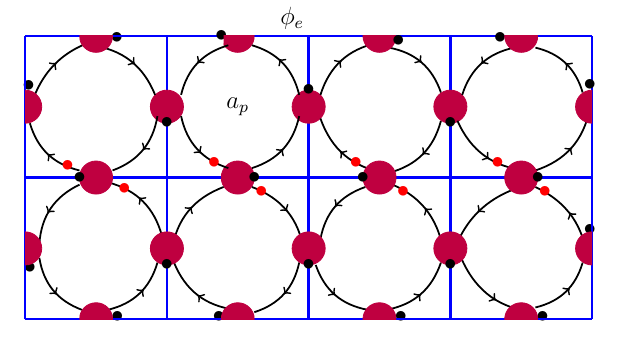}
\caption{
Example of a tensor network for a  graph $\Gamma$. The reason of the presence of the red bullets will become clear in Figure \ref{virtual glueing}. }\label{Full-tensor1}
\end{center}
\end{figure}
The rule for contraction of the tensor network is as follows: one first splits the tensor in terms of rank 2 tensors and then contracts each pair of virtual edges separately and  glue these pieces together.  

\paragraph{Trace.}
To associate a number to a given tensor network, we use the fact that an element $\phi_{e}\in H^*$ is associated to each edge $e\in E$ and that we can associate an element $a_{p}\in H^{\text{cop}}$ to each plaquette.  We can then use the canonical pairing to associate a number to a (degenerate) decorated graph which we interpret as a trace of the tensor, as illustrated in Fig. \ref{pairingcontraction}, for a tensor of rank 2. 
\begin{figure}[H]
\begin{center}
\includegraphics{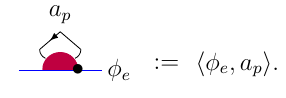}
\caption{We consider first the  simplest tensor depending on a single edge (decorated by $\phi_e$) bounding a single face (decorated by $a_p$). This can be viewed as a tensor of rank 2. The trace of the tensor is obtained by forming the virtual loop and using the canonical pairing between $H^*$ and $H^{cop}$. Geometrically this can be interpreted as the degenerate graph of a single edge bounding a single face.  
    }
\label{pairingcontraction}
\end{center}
\end{figure}
Different orientations of graph edges and  loops are related using the relevant antipode as shown in Figure \ref{edge orientation}.
\begin{figure}[H]
\begin{center}
\includegraphics{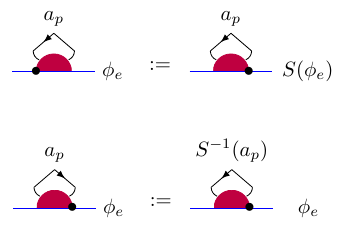}
\caption{The different antipodes are used to change the orientation. These diagrams are assumed to be invariant under global rotations in the plane. 
} \label{edge orientation}
\end{center}
\end{figure}

\paragraph{Contraction.} 
Note that while we contract tensors, it is convenient to also take the trace since at the end we are interested in the traced tensor (ie the tensor network). 
If the face $p$ has more edges $e$ in its boundary, we extend the diagrams in Figure \ref{pairingcontraction} as follows. Consider another edge $e'$ which shares a common vertex with $e$. We define a  gluing operation as in Figure \ref{virtual glueing}. Here the arrows indicate the order in which the coproduct of $a_{p}$ is applied to the basic diagrams. The red dot indicates the origin of this coproduct.
\begin{figure}[H]
\begin{center}
\includegraphics{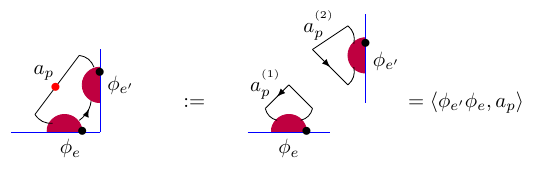}
\caption{Gluing edges with the coproduct of $H^{\text{cop}}$.} \label{virtual glueing}
\end{center}
\end{figure}
We can keep adding edges to form a plaquette and contract the associated rank 2 tensors. Taking the trace for the final tensor is equivalent to close the virtual loop. The value of this trace is obtained by taking the  product of the elements $\phi_i\in H^*$ associated to the edges. 
\begin{figure}[H]
\begin{center}
\includegraphics{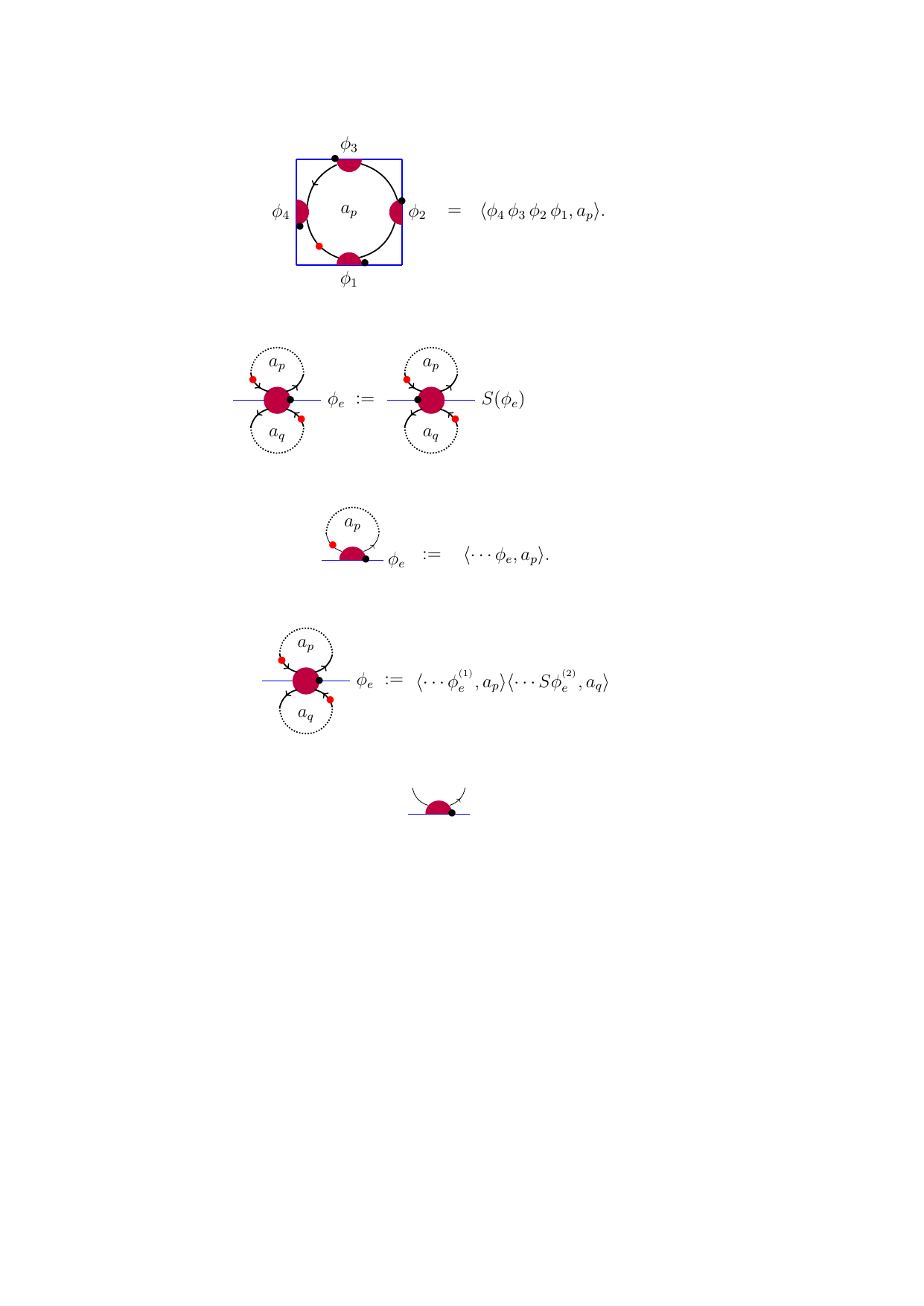}
\caption{We keep contracting tensors of rank 2 and trace the final contraction to obtain the virtual loop and the associated number.} \label{plaquettetrace}
\end{center}
\end{figure}
We note that the (red) dot on the virtual loop is important unless we deal with a co-commutative algebra $H^{\text{cop}}$.  It indicates the origin of the product  in $H^*$ which is constructed bearing in mind the $cop$ in $H^{\text{cop}}$.  Furthermore, we also note that taking the trace of what looked like a degenerate graph in Figure \ref{tensor} can also be interpreted as  the trace of the tensor associated to the face $p$ if we have $\phi_e\equiv \phi_4\cdots \phi_1$. Hence as a shorthand notation for a given edge bounding a face we will sometimes use the shorthand picture displayed in Figure \ref{notation1}
\begin{figure}[H]
\begin{center}\includegraphics{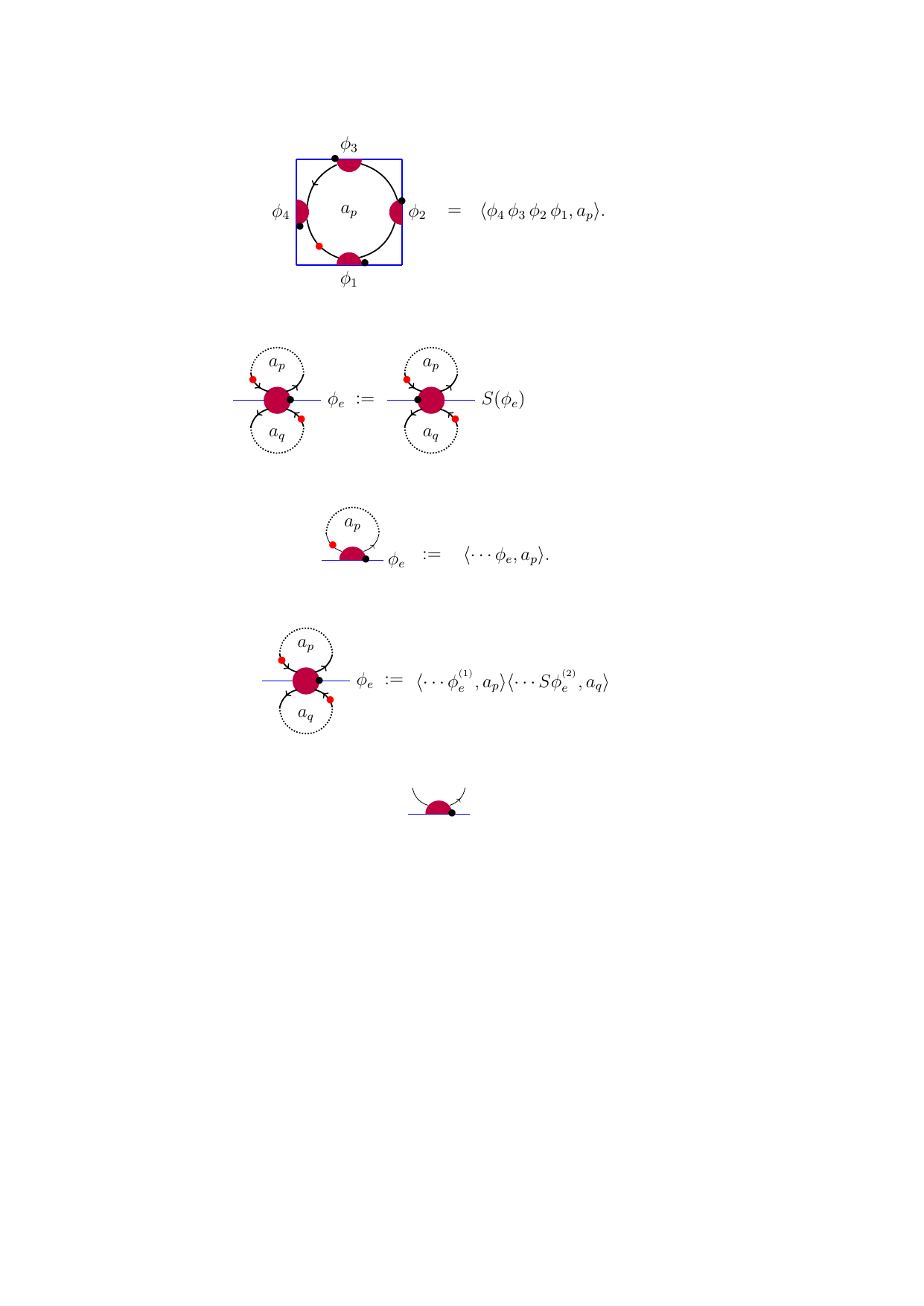}
\caption{For a given face bounded by at most to four edges, we determine the trace of the associated rank 2 tensor. The dashed line represents the possible other red disc contributions associated to the other edges  bounding the face. The $\cdots$ in the formula encodes the elements $\phi_i\in H^*$ associated to the edges bounding the face $p$.  The red dot indicates that the edge we represented is the first   in the (opposite) product.  } \label{notation1}
\end{center}
\end{figure}  
  

\paragraph{Increasing the tensor rank from 2 to 4.}
The edge $e$ will be in general adjacent to two faces, if there are no boundary.  Hence for any  edge $e$ with adjacent  faces $p,q$, we pick $\phi_{e}\in H^{*}$ and $a_{p},a_{q}\in H^{\text{cop}}$ and define the face  gluing operation as in Figure \ref{physical glueing}. 

\begin{figure}[H]
\begin{center}\includegraphics{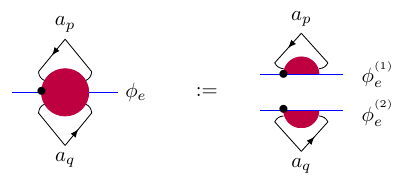}
\caption{Gluing the faces with the coproduct of $H^*$, $\Delta \phi_e=\phi_e^{(1)}\otimes \phi_e^{(2)}$. } \label{physical glueing}
\end{center}\end{figure}
When the faces $p$ and $q$ have many edges, we have to put together Figure \ref{virtual glueing} and Figure \ref{physical glueing}. If furthermore one loop is outgoing we have to also consider the antipode following Figure \ref{edge orientation}.
%
%
%

To evaluate a pair of  virtual loop $p$ and $q$, one performs a clockwise multiplication of all elements labeling the physical edges of the loop and canonically pair the   result  with $a_{p}$ and $a_q$.  Graphically this is illustrated in Figure \ref{contraction rule1}. 
 \begin{figure}[H]
\begin{center}
\includegraphics{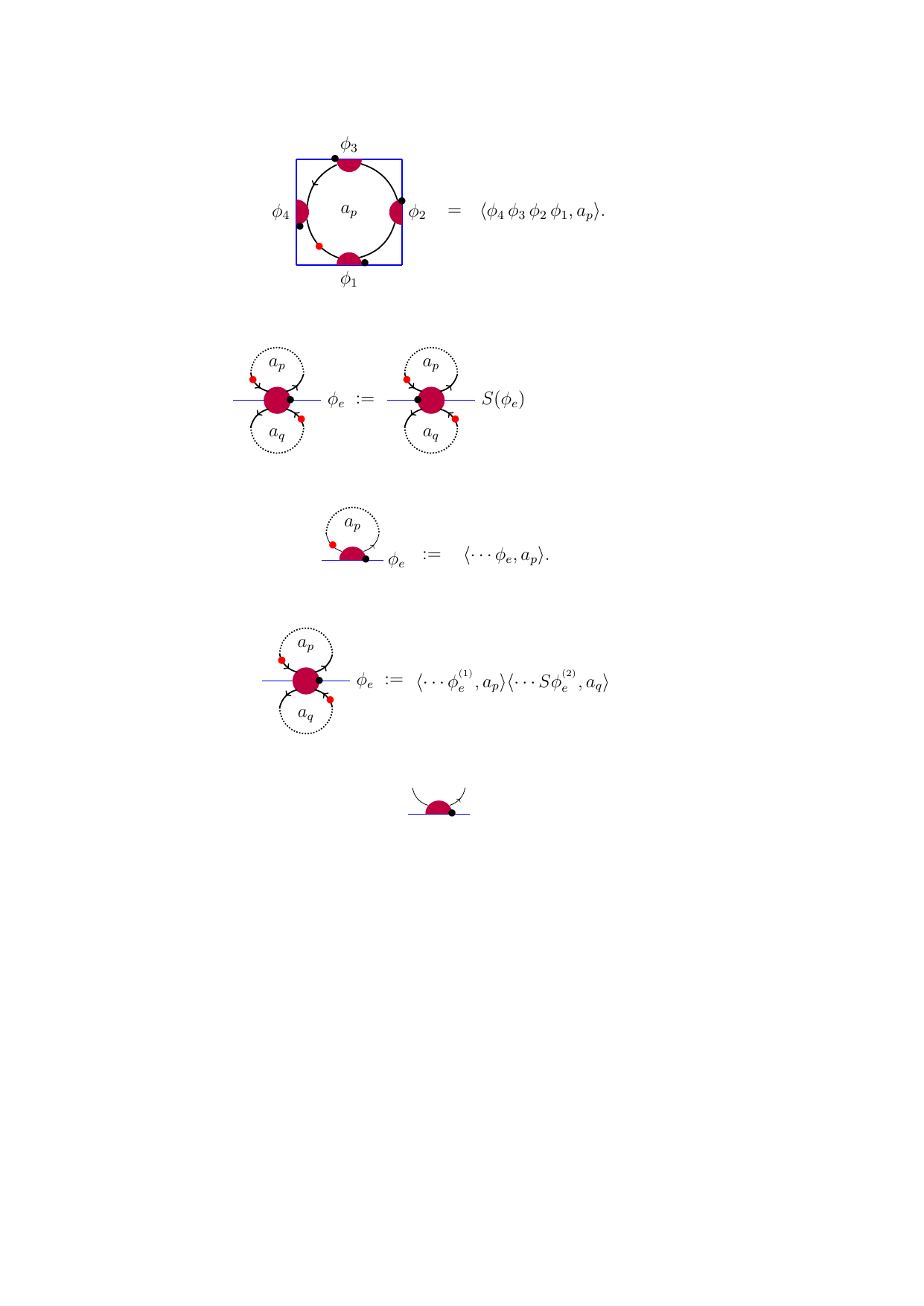}
\caption{We denote by a dashed line the other tensor contribution for the other edges that bound the plaquette $p$. The $\cdots$ in the tensor trace denotes the contribution of these edges.  
 } \label{contraction rule1}
\end{center}
\end{figure}

A change in the orientation of a physical edge using   the antipode in $H^*$  changes the orientation of the corresponding tensor   as shown in  \eqref{edgeorientation}.
\begin{equation}
\includegraphics{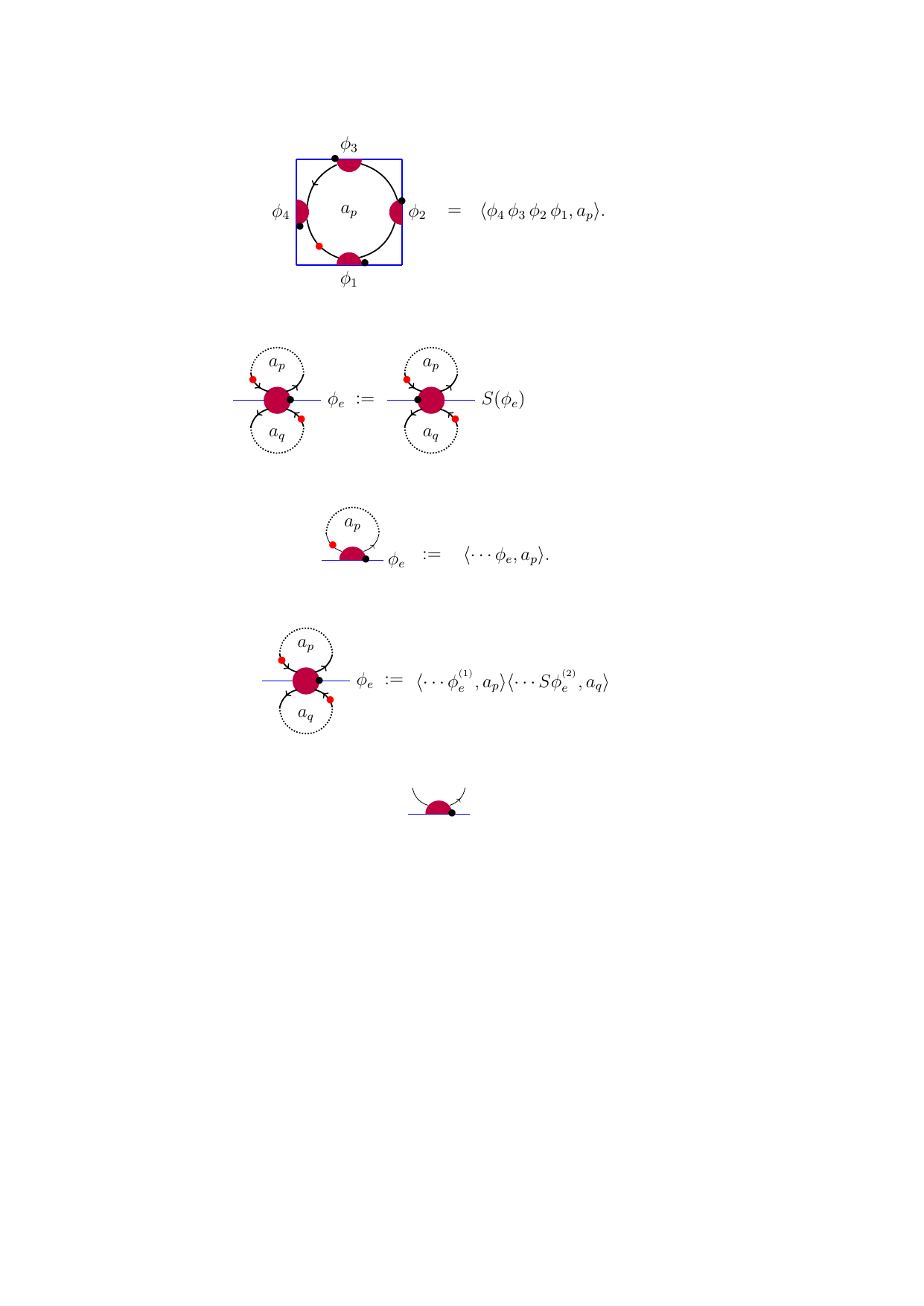}
\label{edgeorientation}
\end{equation}

\paragraph{Trace of a tensor of arbitrary rank.} The  construction of the trace extends to a general tensor. 

\begin{defn}
\label{Hopf trace with boundaries}
\textit{ (Hopf tensor trace with boundaries) 
Let $\partial E$ and $\partial F$ be sets of boundary edges and faces respectively of $\Gamma$.
The Hopf tensor trace associated with the graph ${\Gamma}$ is the function
$\text{ttr}_{{\Gamma}}:
 H^{*\otimes |E|} \otimes H^{\text{cop}\otimes |F|} \otimes H^{*\otimes |\partial E|} \otimes H^{\text{cop}\otimes |\partial F|}\rightarrow \mathbb{C}$,
\begin{equation}
\bigotimes_{e\in E}\phi_{e}\bigotimes_{p\in F}a_{p} \bigotimes_{e\in\partial E}\phi'_{e} \bigotimes_{q\in \partial F}a_{q}\longmapsto \text{ttr}_{{\Gamma}}(\lbrace \phi_{e}\rbrace ;\lbrace a_{p}\rbrace;\lbrace \phi'_{e}\rbrace;\lbrace a_{q}\rbrace  )
\end{equation}
which is defined via  diagrams and the evaluation rules given in Figures  \ref{virtual glueing}, \ref{physical glueing} and \ref{contraction rule1}.  
}
\end{defn}
We emphasize again that our construction is very similar to the one of \cite{BMCA}. Note that the  Hopf tensor trace in the bicrossproduct model  acts on a space  dual to that of the quantum double model  defined in \cite{BMCA},  $(H\tens H^{*\text{op}})^{*}= H^{*}\tens H^{\text{cop}}$.

\subsection{Quantum state}

We now use the tensor trace to define quantum states  for the bicrossproduct model. For simplicity we will focus on the no-boundary case. 
\begin{defn}
\textit{ Let $\phi_{e}\in H^{*}$ and $a_{p}\in H^{\text{cop}}$.  Let $\Gamma$  be  the graph embedded in a surface $\Sigma$  with no boundaries. The Hopf tensor network state on  $\Gamma$ is given by
\begin{equation}
|\Psi_{{\Gamma}}(\lbrace \phi_{e}\rbrace ;\lbrace a_{p}\rbrace)\rangle := 
\text{ttr}_{{\Gamma}}(\lbrace \phi_{e}^{\sst}\rbrace ;\lbrace a_{p}\rbrace)\bigotimes_{e\in E
}|\phi_{e}^{\sso}\rangle.
\end{equation}
}
\end{defn}

 We shall now proceed to  solve the bicrossproduct model in this framework of Hopf tensor network states.
We  choose a particular Hopf tensor network state as a ground state of the model so that this state is  topologically ordered.  

\begin{pro}
(Ground state of the mirror bicrossproduct model).\textit{
Let $\eta$ and $k$ the Haar integrals of $H^{*}$ and $H^{\text{cop}}$ respectively. Then a ground state $|\Psi_{{\Gamma}}\rangle$ of the mirror bicrossproduct 
 model parametrized by $\Gamma$,   $|\Psi_{{\Gamma}}\rangle \in \cP_\Gamma\subset \mH$ is 
\begin{equation}
|\Psi_{{\Gamma}}\rangle :=|\Psi_{{\Gamma}}(\lbrace \eta_{e}\rbrace;\lbrace k_{p}\rbrace)\rangle.
\end{equation}}
\end{pro}
\begin{proof}
Recall that  the Hamiltonian for the mirror bicrossproduct model is a sum of local commuting terms  $A_{v}$ and $B_{p}$. Hence it is sufficient to show that the operators $A_{v}$ and $B_{p}$ leave the state $|\Psi_{{\Gamma}}\rangle$ invariant individually. To prove the proposition, it is enough to focus on the action of such operators on either a face or a vertex of the graph $\Gamma$. 

\medskip 

A face $p$ of  in the bulk of $\Gamma$ decorated by the Haar integral $k$, bounded by the four edges decorated by the Haar integral  $\eta^i\in H^{*}$ and the four faces decorated by   Haar integral $a^i$ leads to the contribution  $|\Psi_{p}\rangle$  given by  Fig. \ref{facecontrib}. Other edge/face orientations than in Fig. \ref{facecontrib} can be treated likewise. 
\begin{figure}[H]
 \begin{center}
\includegraphics[scale=0.9]{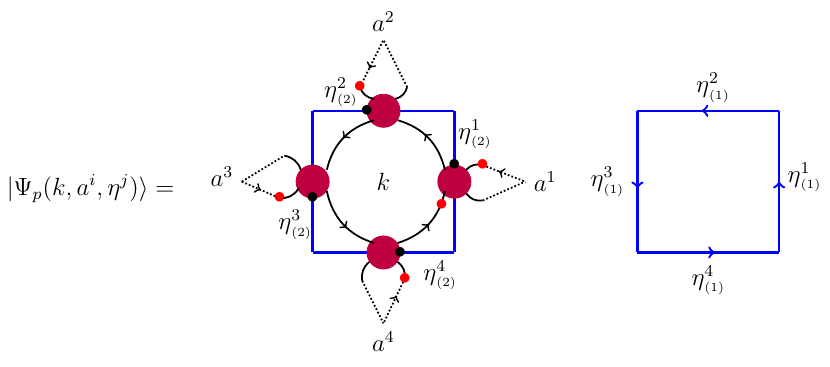}
\caption{Contribution associated to the face $p$ to the state $\Psi_\Gamma$.  Note that the first  diagram on the right side above is the tensor trace function. } \label{facecontrib}
\end{center}
\end{figure}
The state $|\Psi_{p}(k,a^{i},\eta^j)\rangle$ written in an  explicit form reads 
\begin{eqnarray}
\label{interior face}
|\Psi_{p}(k,a^{i},\eta^j)\rangle &=&\langle\eta_{\t\t}^{4}\cdots \eta_{\t\t}^{1},\,k\rangle\,\prod_{j=1}^{4}\langle \cdots S\eta_{\t\o}^{j},\,a^{j}\rangle
  |\eta_{\o}^{1}\rangle\otimes\cdots \otimes|\eta_{\o}^{4}\rangle\nonumber\\
&=& \langle\eta_{\thr}^{4}\cdots \eta_{\thr}^{1},\,k\rangle\prod_{j=1}^{4}\langle \cdots S\eta_{\t}^{j} ,\,a^{j}\rangle\,|\eta_{\o}^{1}\rangle\otimes\cdots  \otimes|\eta_{\o}^{4}\rangle,
\end{eqnarray}
where 
the expression $\eta_{\thr}^{4}\cdots \eta_{\thr}^{1}$ denote composition of the elements $\eta$.  We note that in the case there would be no  face $j$ for example bounding $p$, we would just delete the associated $\langle S\eta_{\t}^{j},\,a^{j}\rangle$ contribution.

The state \eqref{interior face} is invariant under the action of $B_{p}$ as shown below
\begin{eqnarray}
\label{face invariant}
B^{k}_{p}|\Psi_{p}(k,a^{i},\eta^j)\rangle &=& \langle\eta_{\thr}^{4}\cdots \eta_{\thr}^{1},\,k\rangle\langle\eta_{\o\o}^{4}\cdots \eta_{\o\o}^{1},\,k\rangle \prod_{j=1}^{4}\langle \cdots S\eta_{\t}^{j},\,a^{j}\rangle\,|\eta_{\o\t}^{1}\rangle\otimes\cdots\otimes |\eta_{\o\t}^{4}\rangle\nonumber\\
 &=& \langle\eta_{\fo}^{4}\cdots \eta_{\fo}^{1},\,k\rangle\langle\eta_{\o}^{4}\cdots \eta_{\o}^{1},\,k\rangle \prod_{j=1}^{4} \langle \cdots S\eta_{\thr}^{j},\,a^{j}\rangle\,|\eta_{\t}^{1}\rangle\otimes\cdots\otimes |\eta_{\t}^{4}\rangle\nonumber\\
&=& \langle\eta_{\thr}^{4}\cdots \eta_{\thr}^{1},\,k\rangle\langle\eta_{\fo}^{4}\cdots \eta_{\fo}^{1},\,k\rangle \prod_{j=1}^{4}\langle \cdots S\eta_{\t}^{j},\,a^{j}\rangle\,|\eta_{\o}^{1}\rangle\otimes\cdots\otimes |\eta_{\o}^{4}\rangle\nonumber\\
&=& \langle(\eta_{\thr}^{4}\cdots \eta_{\thr}^{1})^2,\,k\rangle\prod_{j=1}^{4} \langle \cdots S\eta_{\t}^{j},\,a^{j}\rangle\,|\eta_{\o}^{1}\rangle\otimes\cdots\otimes |\eta_{\o}^{4}\rangle\nonumber\\
&=& \langle\eta_{\thr}^{4}\cdots \eta_{\thr}^{1},\,k\rangle\prod_{j}^{4} \langle \cdots S\eta_{\t}^{j},\,a^{j}\rangle\,|\eta_{\o}^{1}\rangle\otimes\cdots\otimes |\eta_{\o}^{4}\rangle\nonumber\\
&=& |\Psi_{p}(k,a^{i},\eta^j)\rangle.
\end{eqnarray} 
In the first equality, we used the action of the face operator $B^{k}_{p}$ of Definition \ref{vex and face operators for arbitrary graph} on the state $|\Psi_{p}(k,a^{i},\eta^j)\rangle$. We perform a renumbering on $\eta$ in the second equality. The fifth equality uses the property of the Haar integral.

\medskip 

Next we consider a vertex $v$ with four ingoing  edges (other orientations can be treated likewise).  Again, $\eta^j$ and $a^i$ are Haar integrals. The vertex contribution to $\Psi_{{\Gamma}}$ is given by Fig. \ref{contribvertex}.

\begin{figure}[H]
\begin{center}
\includegraphics[scale=0.9]{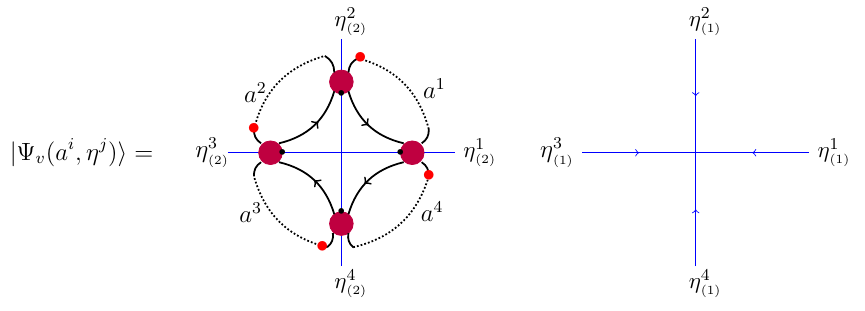}
\caption{Contribution associated to the vertex $v$ to the state $\Psi_\Gamma$.  Note that the first  diagram on the right side above is the tensor trace function. } \label{contribvertex}
\end{center}
\end{figure}

In a more explicit form, the state contribution from the vertex is
\begin{eqnarray}
\label{vertex contribution}
|\Psi_{v}(a^{i},\eta^j)\rangle &=& \langle S\eta^{1}_{\t\o}\cdot \eta^{2}_{\t\t},\,a^{1}\rangle\cdots  
\langle S\eta^{4}_{\t\o}\cdot \eta^{1}_{\t\t},\,a^{4}\rangle\, |\eta_{\o}^{1}\rangle\otimes\cdots \otimes|\eta_{\o}^{4}\rangle\nonumber\\[0.3\baselineskip]
&=& \langle S\eta^{1}_{\t}\cdot \eta^{2}_{\thr},\,a^{1}\rangle\cdots \langle S\eta^{3}_{\t}\cdot \eta^{4}_{\thr},\,a^{4}\rangle\langle S\eta^{4}_{\t}\cdot \eta^{1}_{\thr},\,a^{4}\rangle \, |\eta_{\o}^{1}\rangle\otimes\cdots \otimes|\eta_{\o}^{4}\rangle\nonumber\\
&=& \prod_{j=1}^{4}\langle S\eta^{j}_{\t}\cdot \eta^{j+1}_{\thr},\,a^{j}\rangle |\eta_{\o}^{1}\rangle\otimes\cdots \otimes|\eta_{\o}^{4}\rangle,
\end{eqnarray}
where if $j=4$, $j+1\equiv 1$. This convention will be used below as well. 
This contribution to the ground state  is then invariant under the action of $A_{v}$:
\bes
&&
A^{\ell}_{v}|\Psi_{v}(a^{i},\eta^j)\rangle 
=\prod_{j=1}^{4}\langle S\eta^{j}_{\t}\cdot \eta^{j+1}_{\thr},\,a^{j}\rangle 
\,\,|\eta_{\o\t}^{1}\rangle\otimes\cdots \otimes|\eta_{\o\t}^{4}\rangle\nonumber\\ [0.3\baselineskip]
&& \times\,\langle \ell^j,(S\eta^4_{\o\o})(S\eta^3_{\o\o})(S\eta^2_{\o\o})(S\eta^1_{\o\o})\eta^1_{\o\thr}\eta^2_{\o\thr}\eta^3_{\o\thr}\eta^4_{\o\thr}\rangle \nonumber\\[0.3\baselineskip]
&=& \prod_{j=1}^{4}\langle S\eta^{j}_{\fo}\cdot \eta^{j+1}_{\thr},\,a^{j}\rangle \,\langle \ell^j,(S\eta\o^4)(S\eta\o^3)(S\eta\o^2)(S\eta\o^1)\eta\thr^1\eta\thr^2\eta\thr^3\eta\thr^4\rangle
\,|\eta\t^{1}\rangle\otimes\cdots \otimes|\eta\t^{4}\rangle\, \nonumber\\[0.3\baselineskip]
&=&
\prod_{j=1}^{4}\langle S\eta^{j}_{\fo}\cdot \eta^{j+1}_{\thr},\,a^{j}\rangle \langle\ell\o^j,(S\eta\o^4)(S\eta\o^3)(S\eta\o^2)(S\eta\o^1)\rangle \langle \ell\t^j,\eta\thr^1\eta\thr^2\eta\thr^3\eta\thr^4\rangle
\,|\eta\t^{1}\rangle\otimes\cdots \otimes|\eta\t^{4}\rangle\nonumber
\ees
\bes
&=&
\prod_{j=1}^{4}\langle S\eta^{j}_{\fo}\cdot \eta^{j+1}_{\thr},\,a^{j}\rangle \langle S\ell\o^j,\eta\o^1 \eta\o^2\eta\o^3\eta\o^4\rangle \langle \ell\t^j,\eta\thr^1\eta\thr^2\eta\thr^3\eta\thr^4\rangle
\,|\eta\t^{1}\rangle\otimes\cdots \otimes|\eta\t^{4}\rangle\nonumber\\
&=&
\prod_{j=1}^{4}\langle S\eta^{j}_{\thr}\cdot \eta^{j+1}_{\thr},\,a^{j}\rangle\,\epsilon(\ell^{j})\, \epsilon(\eta^{1}_{\o}) \, \epsilon(\eta^{2}_{\o}) \, \epsilon(\eta^{3}_{\o})\, \epsilon(\eta^{4}_{\o})\,|\eta\t^{1}\rangle\otimes\cdots \otimes|\eta\t^{4}\rangle\nonumber\\
\label{vertex invariant}
&=& 
\prod_{j=1}^{4}\langle S\eta^{j}_{\t}\cdot \eta^{j+1}_{\thr}),\,a^{j} \rangle\,|\eta\o^{1}\rangle\otimes\cdots \otimes|\eta\o^{4}\rangle = |\Psi_{v}(a^{i},\eta^j)\rangle .
\ees
We used the definition of the vertex operator of Definition \ref{vex and face operators for arbitrary graph} in the first equality. We  permute  cyclicly the different components of $\eta$ in the definition of the vertex operator in the second equality. The third equality uses the counit property of a Hopf algebra. While in the fourth equality we used the fact that $\epsilon(\eta^{j}_{\o})=\epsilon(\ell^{j})=1$, to get to the fifth equality. 
\end{proof}
The  quantum state $|\Psi_{{\Gamma}}\rangle$ is nothing but a trivial representation of the $M(H)$ and  a vacuum of the model, parametrized by $\Gamma$. If the model can be shown to be topological, which we expect it to be,  such state should have then a  trivial  topological charge.

\medskip

\section{Outlook}
\label{outlook}
In this article we  proposed for the first time a (Kitaev) lattice model  not based on  the Drinfeld quantum double, but instead on the (mirror) bicrossproduct quantum group. Given a graph with cyclic ordering of edge ends at each vertex, our construction of a Hilbert space for the bicrossproduct model for a Hopf algebra $H$ is based on the extension of the canonical   covariant action of the bicrossproduct quantum group  $M(H)$  on $H^*$ to an action on $H^{*\tens |E|}$, the $|E|$-fold tensor product of $H^*$, where $|E|$ is the number of edges. 
This action which enter the definition of the triangle operators and consequently  the vertex and face operators are in general not required to be covariant as we seek a bicrossproduct module and not a module algebra. {We obtain an exactly solvable Hamiltonian for the new model and also a representation of the ground state by introducing a tensor network representation}.

{As with Kitaev's original construction \cite{AYK}, which exhibited some topological features (such as the degeneracy of the ground state and the anyonic nature of quasi-particles), we suspect our model  also exhibits similar topological properties. These topological features can be further explored thanks to the results in \cite{MO} where the  $R$-matrix -- a structure known to play an important role in the braiding and fusion of quasi-particles -- was explicitly given. We defer the study of the topological invariance (ie the degeneracy of the ground state) and the model  properties to later investigations. }

Furthermore, the physical properties of the ground state have still to be explored. In particular, thanks to the tensor network representation of the ground state we provided, it would be interesting  to see whether we have some area law for the entanglement entropy, or a similar hierarchy as it was  described in \cite{BMCA}. 

\medskip

This new model opens up other new directions to explore. From the quantum gravity perspective, the vertex and face operators are related to the Gauss constraint and the Flatness constraint, which are usually characterized in terms of symmetries by the Drinfeld double in the quantum double model. It would be interesting to determine whether the bicrossproduct case has also some geometrical meaning. The semi-duality between the quantum groups  seems to indicate naively that we dualize somehow for example the Flatness constraint into another Gauss constraint, or vice versa. Investigations are currently underway to see if this argument can be made more rigorous.

\smallskip
As the Kitaev quantum double model is known to be equivalent to  the combinatorial quantization of Chern-Simons theory based on the Drinfeld double \cite{CM}. It would be interesting to see whether this result extends to the bicrossproduct case, namely that our model can be related to the combinatorial quantization of Chern-Simons theory based on the bicrossproduct quantum group.  In the case of the Drinfeld double, one required  a Hopf gauge theoretic  framework \cite{MW}. This provides another interesting question to address in the context of the bicrossproduct model. For this construction, one required a universal  $R$-matrix.  This is now known explicitly for the bicrossproduct quantum group due  to  recent work in \cite{MO} which provides an explicit expression of the $R$-matrix for this  quantum group.

\acknowledgments
 The authors would like to thank T. Fritz for his comments and questions. We also thank the anonymous referee for many constructive suggestions and in particular for  identifying some key issues in the previous drafts. 
 
 This research was partly  supported by Perimeter Institute for Theoretical Physics. Research
at Perimeter Institute is supported by the Government of Canada through the Department of Innovation, Science and
Economic Development Canada and by the Province of Ontario through the Ministry of Research, Innovation and
Science.

\bibliographystyle{JHEP}

\bibliography{biblio-kitaev}

\end{document}